\numberwithin{equation}{section}
\newcommand\NoBlackBoxes{\global\overfullrule0pt}
\def\ie{i.e.\@\xspace}
\newtheorem{definition}{Definition}[section]
\newtheorem{theorem}[definition]{Theorem}
\newtheorem{prop}[definition]{Proposition}
\newtheorem{lemma}[definition]{Lemma}
\newtheorem{cor}[definition]{Corollary}
\theoremstyle{remark}
\newtheorem{remark}[definition]{Remark}
\renewcommand{\b}{\beta}
\newcommand{\N}{{\mathbb N}}
\newcommand{\E}{{\mathbb E}}
\newcommand{\vep}{\varepsilon}
\renewcommand{\epsilon}{\varepsilon}
\newcommand{\DP}{\mathcal{DNPP}_{[2k_1]\cup [2k_2]}^{2m}}
\newcommand{\vp}{\varphi}
\newcommand{\Y}{\textbf{Y}}
\newcommand{\HM}{\textbf{H}}
\newcommand{\tr}{\mathrm{tr}}
\newcommand{\W}{\textbf{W}}
\newcommand{\grab}{\bigskip \noindent}
\def\b{\mathbb }
\def\rr{\b R}
\def\nn{\b N}
\def\erw{\b E}
\def\phi{\varphi }
\def\calc{{\mathcal C}}
\def\cald{{\mathcal D}}
\def\calm{{\mathcal M}}
\def\caln{{\mathcal N}}
\def\calo{{\mathcal O}}
\def\calp{{\mathcal P}}
\def\on{\operatorname}
\def\tra{^{\prime}}
\title[Gaussian Fluctuations]{Gaussian Fluctuations for Sample Covariance Matrices with Dependent Data}
\author{Olga Friesen}
\address[Olga Friesen]{Westf\"alische Wilhelms-Universit\"at M\"unster,
Fachbereich Mathematik,
Einsteinstra\ss e 62, 48149 M\"unster, Germany}
\email[Olga Friesen]{olga.friesen@uni-muenster.de}
\author[Matthias L\"owe]{Matthias L\"owe}
\address[Matthias L\"owe]{Westf\"alische Wilhelms-Universit\"at M\"unster,
Fachbereich Mathematik,
Einsteinstra\ss e 62, 48149 M\"unster, Germany}
\email[Matthias L\"owe]{maloewe@math.uni-muenster.de}
\author[Michael Stolz]{Michael Stolz}
\address[Michael Stolz]{Ruhr-Universit\"at Bochum,
Fakult\"at f\"ur Mathematik,
Universit\"atsstra\ss e 150, 44780 Bochum, Germany}
\email[Michael Stolz]{michael.stolz@ruhr-uni-bochum.de}
\thanks{The research of the first author was supported by Deutsche Forschungsgemeinschaft (DFG) via SFB 878 at University of M\"unster. The third author was supported by DFG via 
SFB-TR 12.}
\date{\today}
\subjclass[2010]{60B20, 60F05}
\keywords{random matrices, sample covariance matrices, Mar\v{c}enko-Pastur law, dependent random variables}
\begin{document}
\begin{abstract}
It is known (Hofmann-Credner and Stolz (2008)) that the convergence of the mean empirical spectral distribution of a sample covariance matrix $\W_n = 1/n\ \Y_n \Y_n^t$ to the
Mar\v{c}enko-Pastur law remains unaffected if the rows and columns of $\Y_n$ exhibit some dependence, where only the growth of the number of dependent entries, but
not the joint distribution of dependent entries needs to be controlled. In this paper we show that the well-known CLT for traces of powers of $\W_n$ also extends to
the dependent case.
\end{abstract}

\maketitle

\section{Introduction}

Sample covariance matrices are fundamental objects in multivariate statistical inference that have found their way into random matrix theory. There, one usually studies them in 
the simplified form 
$$ \W_n = \frac{1}{n} \Y_n \Y_n^t,$$ where
$$ \Y_n = (a_n(p, q))_{p = 1, \ldots, s(n); q = 1, \ldots, n} $$ is a rectangular array of independent real random variables, which are typically assumed to be centered, have unit variance, and satisfy uniform moment bounds 
\begin{equation}\label{mombound}
\sup_n \max_{\substack{p = 1, \ldots, s(n),\\ q = 1, \ldots, n}} \erw(|a_n(p, q)|^k) < \infty
\end{equation} for all $k \in \nn$.

If $\lambda_1, \ldots, \lambda_{s(n)} \in \rr$ denote the eigenvalues of $\W_n$ (with multiplicities), define their empirical measure
as $$ \on{L}_n(\W_n) := \frac{1}{s(n)} \sum_{j=1}^{s(n)} \delta_{\lambda_j},$$ where $\delta_{\lambda_j}$ denotes the Dirac measure
supported in $\lambda_j$. In a random matrix theory context, one is typically interested in the asymptotics of 
$\on{L}_n(\W_n)$ as  $n \to \infty$ and 
\begin{equation}\label{y}
\frac{s(n)}{n} \to y,\quad 0 < y < \infty.
\end{equation} In statistical terms, if the $n$ columns of $\Y_n$ each encode an observation of size $s(n)$, this means that the number of variables under consideration
is of the same order of magnitude as the sample size. This is at variance with classical multivariate analysis, but makes perfect sense in the context of 
modern data mining techniques
(see the discussion of this point in the introduction of \cite{ElKarouiAOP2007}).

It is well known that in this regime the random probability measure $\on{L}_n(\W_n)$ converges almost surely weakly to the Mar\v{c}enko-Pastur
distribution with parameter $y$. This result
may be interpreted as a law of large numbers.
To understand the fluctuations about this limit, the family 
$ \left( \tr(\W_n^{k})\right)_{k \in \nn}$ has been studied in the large $n$ limit and found 
to be asymptotically Gaussian
by Jonsson \cite{Jonsson}, Cabanal-Duvillard \cite{CD}, and Kusalik, Mingo, and Speicher \cite{KusalikMingoSpeicher}, among others. See also Johansson \cite{JohanssonDuke} for the corresponding result for Wigner matrices. 
Particular interest has been devoted to ``diagonalizing'' the fluctuations in the sense of finding functions $f_1, \ldots,
f_l$ such that, as $n \to \infty$, $(\tr(f_1(\W_n)), \ldots, \tr(f_l(\W_n)))$ tends to a family of {\it independent} Gaussians. By the cited work of Cabanal-Duvillard,  the $f_i$ may be chosen as shifted Chebyshev polynomials of the first kind. (As observed by Johansson, in the Wigner case one may take, up to rescaling, the usual Chebyshev polynomials of the first kind.) 

These fluctuation results have been proven under the assumption that the entries of $\Y_n$ are independent. 
One may ask how many violations of this independence assumption may be tolerated for 
the limit theorems to remain intact.  In \cite{stolzcredner}, Hofmann-Credner and one of the authors proved that the convergence
of $\on{L}_n(\W_n)$ to the Mar\v{c}enko-Pastur distribution remains valid if one allows the entries of 
$\Y_n = (a_n(p, q))_{p = 1, \ldots, s(n); q = 1, \ldots, n}$ to depend on each other in arbitrary ways, 
both within a column, and across different columns,
as long as for any
entry $a_n(p, q)$ the number of entries that depend on it does not grow too fast as $n \to \infty$.
This was made precise using a formalism that had been introduced by Schenker and Schulz-Baldes in 
\cite{Schenker_Schulz-Baldes} to study Wigner matrices with dependent entries.\\ 

In the present paper, we extend the study of sample covariance matrices with dependent data to the level of fluctuations, again
following the lead of Schenker and Schulz-Baldes, who in \cite{Schenker_Schulz-Baldes2} undertook an analogous 
investigation in the case of Wigner matrices. We start in Section \ref{results} with a precise formulation of the conditions we have to impose on the number of dependent entries. 
We then state in Theorem \ref{maintheo} our main result concerning the fluctuations, including the convergence to a Gaussian family and the diagonalization by shifted Chebyshev polynomials. In Section \ref{cumulants}, some important facts about cumulants are summarized. We resort to those mainly in Section \ref{initial} which contains the proof of the first part of Theorem \ref{maintheo} and some steps towards the second part. Sections \ref{cov1}, \ref{red-multiind} and \ref{chebyshev} complete the proof. Finally, 
in Section \ref{shortcuts} we present an alternative approach to Theorem \ref{maintheo} which relies on the results for Wigner matrices given in \cite{Schenker_Schulz-Baldes2}.

\section{Set-up and main results}
\label{results}

In view of a physics application that will be discussed in a companion paper, we will state and prove our results in a slightly more general framework. We will 
allow the entries of $\Y_n$ to be complex random variables, thus defining $\W_n := \frac{1}{n} \Y_n \Y_n^{\ast},$ 
where the star means conjugate transpose,
and let the size of $\Y_n$ be $s(n) \times t(n)$, where we assume that there exist $\kappa, \mu \in (0, \infty)$
such that
\begin{equation}
\label{kappamu}
\lim_{n \to \infty} \frac{s(n)}{n} = \kappa\quad \text{and}\quad \lim_{n \to \infty} \frac{t(n)}{n} = \mu.
\end{equation}
We assume the entries $a_n(p, q)\ (p = 1, \ldots, s(n),\ q = 1, \ldots, t(n))$ to be centered of variance
$\sigma^2 > 0$, and to satisfy \eqref{mombound}, where the maximum
now runs over $p = 1, \ldots, s(n),\ q = 1, \ldots, t(n)$.

For a positive integer $n$ write $[n]$ for $\{1, \ldots, n\}.$ Consider an equivalence relation
$\sim_n$ on $[s(n)] \times [t(n)]$ and assume that the random matrix 
$\Y_n = ( a_n(p, q))_{p = 1, \ldots, s(n), q =
1, \ldots, t(n)}$ has a dependence structure that is controlled by $\sim_n$ in the following way: 
The random variables $a_n(p_1,
q_1), \ldots, a_n(p_j, q_j)$ are independent whenever $(p_1, q_1),
\ldots, (p_j, q_j)$ belong to $j$ distinct equivalence classes of
the relation $\sim_n$. On the other hand, there is no assumption on the joint distribution of the matrix entries that correspond to equivalent index pairs. 
To state our assumptions on the growth of the equivalence classes of $\sim_n$ we introduce the following quantities: \\
\begin{align*}
\beta_0(n) := \# & \left\{(p,p\tra, q)\in [s(n)]^2\times [t(n)] : (p,q)\sim_n (p\tra,q) \ \& \ p\neq p\tra\right\} \\ & \vee 
\#\left\{(p,q, q\tra)\in [s(n)]\times [t(n)]^2 : (p,q)\sim_n (p,q\tra) \ \& \ q\neq q\tra\right\}, \end{align*}
\begin{align*}
\beta_1(n) := \max\limits_{p \in [s(n)]}& \# \{(p\tra,q, q\tra)\in 
[s(n)] \times [t(n)]^2:\ (p,q)\sim_n(p\tra, q\tra)\}\\
&\ \vee \max\limits_{q \in [t(n)]} \# \{(p, p\tra, q\tra)\in 
[s(n)]^2 \times [t(n)]:\ (p,q)\sim_n(p\tra, q\tra)\},
\end{align*}
\begin{align*}
\beta_2(n) := \max_{(p,q)\in [s(n)]\times [t(n)]}& \#\left\{(p',q')\in [s(n)]\times [t(n)] : (p,q)\sim_n (p',q')\right\},\\
\beta_3(n) := \max\limits_{p, p\tra \in [s(n)], q \in [t(n)]}& \#\{q\tra\in [t(n)]:\
(p,q)\sim_n(p\tra, q\tra)\}\\ &\ \vee \max\limits_{p \in [s(n)], q, q\tra \in [t(n)]}
\#\{p\tra\in [s(n)]:\ (p,q)\sim_n(p\tra, q\tra)\}.
\end{align*}

The following theorem is the starting point for the present paper:

\begin{theorem}[\cite{stolzcredner}]
\label{katrinmain}
If $\beta_0(n) = \calo(n^{2-\epsilon}),\  \beta_1(n) = \calo(n^{2-\epsilon})$ and $\beta_3(n) = \calo(n^{\epsilon})$ for all 
sufficiently small $\epsilon > 0$, then, as $n \to \infty$, the mean empirical measure $\erw(\on{L}_n(\W_n))$ converges
weakly to a probability measure supported on a compact interval in $\rr^+$ with $k$-th moment equal to
$$ \frac{\sigma^{2k}}{k} \sum_{i=1}^k \binom{k}{i} \binom{k}{i-1} \kappa^{i-1} \mu^{k-i+1}.$$ If $\mu = 1$, this limit is the Mar\v{c}enko-Pastur distribution with
parameter $\kappa$.
\end{theorem}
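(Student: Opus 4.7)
My strategy is the method of moments: I want to show that for every $k \in \nn$,
\begin{equation*}
\frac{1}{s(n)} \erw[\tr(\W_n^k)] \longrightarrow m_k := \frac{\sigma^{2k}}{k} \sum_{i=1}^{k} \binom{k}{i}\binom{k}{i-1} \kappa^{i-1}\mu^{k-i+1}.
\end{equation*}
The uniform moment bound \eqref{mombound} implies that $(m_k)$ grows at most geometrically, so it determines a unique probability measure on a compact subset of $\rr^+$; the method of moments then yields weak convergence of $\erw(\on{L}_n(\W_n))$ to this limit.

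First I would expand
$$\frac{1}{s(n)} \erw[\tr(\W_n^k)] = \frac{1}{s(n)\, n^k} \sum_{\vec{p}\in[s(n)]^k,\ \vec{q}\in[t(n)]^k} \erw\Bigl[\prod_{i=1}^{k} a_n(p_i, q_i)\, a_n(p_{i+1}, q_i)\Bigr]$$
(with cyclic convention $p_{k+1}=p_1$) and to each tuple $(\vec p, \vec q)$ associate the partition $\pi(\vec p, \vec q)$ of the $2k$ factors whose blocks are the $\sim_n$-equivalence classes. Since the entries are centered, any tuple whose partition has a singleton block contributes zero; together with \eqref{mombound}, this reduces the problem to counting tuples grouped by the combinatorial type of $\pi(\vec p, \vec q)$. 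The dominant contribution will come from non-crossing \emph{identical} pair partitions, for which paired factors share the same index pair and the associated bipartite graph $G(\vec p, \vec q)$ is a tree with, say, $i$ distinct $p$-vertices and $k-i+1$ distinct $q$-vertices. For each such shape the number of index realizations is $\sim s(n)^i\, t(n)^{k-i+1}$, each surviving expectation equals $\sigma^{2k}$, and the number of admissible shapes with prescribed $i$ is the Narayana number $\tfrac{1}{k}\binom{k}{i}\binom{k}{i-1}$. After normalizing by $s(n) n^k$ and using \eqref{kappamu}, summing over $i$ reproduces $m_k$.

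The second half of the argument consists in showing that every other configuration contributes $o(1)$. Three error types must be treated: (a) pair partitions whose incidence graph contains a cycle; (b) partitions with at least one block of size $\ge 3$; and (c) pair partitions in which some pair consists of two \emph{distinct} but $\sim_n$-equivalent index pairs. Types (a) and (b) can be handled by the standard Euler-characteristic argument already present in the independent case: each extra vertex identification or each oversized block costs one power of $n$, reducing the count from $\calo(n^{k+1})$ to $\calo(n^k)$. Type (c) is the genuinely new case in which the dependence hypotheses enter. My plan there is to fix the location of a dependent pair inside the cyclic product and then bound the remaining degrees of freedom using the $\beta_j$: equivalences sharing a row or column are counted by $\beta_0$, the number of partners of a fixed index pair sharing one coordinate by $\beta_3$, and generic equivalences by $\beta_1$. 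The hypotheses $\beta_0, \beta_1 = \calo(n^{2-\epsilon})$ and $\beta_3 = \calo(n^\epsilon)$ will then yield error counts of order $\calo(n^{k+1-\epsilon'})$ for some $\epsilon' > 0$, which vanish after normalization. I expect the main obstacle to lie in the systematic bookkeeping: one must classify the error configurations so as to match each with the tightest available $\beta_j$-bound while avoiding double counting when several dependent pairs coexist. Adapting the formalism of \cite{Schenker_Schulz-Baldes} from the Wigner to the covariance setting should provide the cleanest organization.
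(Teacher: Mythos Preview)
Your proposal is a correct outline of the moment-method proof, but it is worth noting that the paper does not actually prove this theorem from scratch: it simply cites \cite[Thm.\ 4.1]{stolzcredner}, remarking only that the result there must be applied to $\Y_n^{\ast}$ rather than $\Y_n$ (which is why $\beta_1(n)$ is symmetrized here), and that the slightly different hypotheses $\beta_0(n)=\calo(n^{2-\epsilon})$, $\beta_1(n)=\calo(n^{2-\epsilon})$, $\beta_3(n)=\calo(n^{\epsilon})$ go through by the same argument. So the paper treats Theorem~\ref{katrinmain} as background and reserves its effort for the fluctuation result.

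What you have written is, in effect, a reconstruction of the proof in \cite{stolzcredner}: the expansion of $\tr(\W_n^k)$ into a sum over closed walks on the bipartite index set, the reduction to partitions without singletons, the identification of the leading terms with non-crossing pair partitions whose incidence graph is a tree (counted by Narayana numbers, yielding the stated moments), and the use of $\beta_0,\beta_1,\beta_3$ to kill the three error types. This is sound, and your classification (a)--(c) of the subleading configurations is the right one. Two minor remarks: in the complex setting the second factor in each pair carries a conjugate, so the product should read $a_n(p_i,q_i)\,\overline{a_n(p_{i+1},q_i)}$; and your case (c) is where the $\beta_0$ and $\beta_3$ hypotheses are genuinely needed, while (a) and (b) already use $\beta_1$ (or just $\beta_2$) to control the number of admissible index pairs once one coordinate is fixed. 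The bookkeeping you anticipate is exactly what \cite{stolzcredner} carries out, following the template of \cite{Schenker_Schulz-Baldes}.
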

\begin{proof} This is \cite[Thm.\ 4.1]{stolzcredner}, adapted to the present set-up by applying it to $\Y_n^{\ast}$ in the place of $\Y_n$. This is reflected in the fact
that $\beta_1(n)$ is symmetrized, while the quantity that appears in (MP1) of \cite{stolzcredner} is not. Note furthermore that the empirical measure was defined slightly 
different in \cite{stolzcredner}, and the assumptions were given by $\beta_0(n) = o(n^2),\  \beta_1(n) = o(n^2)$ and $\beta_3(n) = \calo(1)$. However, the same result holds in our situation by essentially the same proof.
\end{proof}

\begin{remark} This result should be compared with the models that were studied in \cite{BoseSenAIHP2010},
where $\Y_n$ may be chosen, e.g., as a Toeplitz matrix. Clearly, the conditions of Theorem \ref{katrinmain} on the $\beta_k(n)$ are violated in this case, and in fact this 
class of models gives rise to new limit distributions as $n \to \infty$. \end{remark}

For the study of fluctuations we will need more restrictive conditions on the growth of the equivalence classes, which will be stated in terms of $\beta_0(n)$ and 
$\beta_2(n)$. Note that $\beta_1(n) \le (s(n) \vee t(n)) \beta_2(n)$ and $\beta_3(n) \le \beta_2(n)$. So the assumptions of the
following theorem, which is the main theorem of the present paper, jointly imply the conclusions of Theorem \ref{katrinmain}. To state our main result, we need to introduce a family 
of orthogonal polynomials that will be discussed more extensively in Section \ref{chebyshev} below. Denote by $T_k\ (k \in \nn)$ the 
monic Chebyshev polynomials of the first kind, 
with orthogonality measure $\frac{dx}{\sqrt{4-x^2}}$.
For any 
$y \in (0, \infty),\ k \in \nn$ and $(\sqrt{y} - 1)^2 \le x \le (\sqrt{y} + 1)^2$ define 
$$ \Gamma^y_k(x) := \sqrt{y^k} T_k\left( \frac{x-(1+y)}{\sqrt{y}}\right)$$
and $$\Gamma^y_k(x, \sigma) := \sigma^{2k} \Gamma^y_k(\frac{x}{\sigma^2}).$$

The following theorem is the main result of this paper. 
\begin{theorem}
\label{maintheo}
~
\begin{itemize}
\item[(a)] If $\beta_2(n) = \mathcal{O}\left(n^\vep\right)$ for all $\epsilon >0$, then, as $n \to \infty$, 
the random vector
$$ \left(\tr(\W_n^{k_1}), \ldots, \tr(\W_n^{k_j})\right)$$ 
(where $j \in \nn$, $k_1,\ldots,k_j\in\nn$) converges in distribution to a jointly Gaussian vector (whose components are not necessarily nondegenerate). 
\item[(b)] If, in addition, $\beta_0(n) \beta_2(n)^{\eta} = o(n^2)$
for all $\eta > 0$, then there holds
$$ \on{Cov}(\tr(\Gamma^{y}_j(\W_n,\sigma)), \tr(\Gamma^{y}_k(\W_n,\sigma))) = \delta_{jk} U_n(2j) + o(1),$$
where $y = \kappa/\mu$ and $U_n(2j)$ is given in \eqref{un} in Section \ref{red-multiind} below.
\end{itemize}
\end{theorem}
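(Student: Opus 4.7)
The plan is to establish Theorem \ref{maintheo} via the method of joint cumulants, adapting to the bipartite setting the strategy that Schenker and Schulz-Baldes \cite{Schenker_Schulz-Baldes2} developed for Wigner matrices. The starting point is the expansion
\[
\tr(\W_n^k) = \frac{1}{n^k} \sum_{\vec p \in [s(n)]^k,\, \vec q \in [t(n)]^k} a_n(p_1, q_1)\, \overline{a_n(p_2, q_1)}\, a_n(p_2, q_2)\, \overline{a_n(p_3, q_2)} \cdots \overline{a_n(p_1, q_k)},
\]
which exhibits the trace as a sum over closed bipartite walks of length $2k$. By the multilinearity of cumulants, the joint cumulant $c_r(\tr(\W_n^{k_1}), \ldots, \tr(\W_n^{k_r}))$ becomes a weighted sum, over $r$-tuples of such walks, of joint cumulants of the $r$ associated products of entries.

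For part (a), the cumulant criterion for Gaussianity reduces the task to showing that $c_r \to 0$ for all $r \geq 3$. A standard property of joint cumulants is that they vanish as soon as the arguments split into two nontrivially independent subfamilies; hence only those $r$-tuples of walks whose entries are $\sim_n$-linked into a single component contribute. Each ``inter-walk link'' joining two walks costs a factor $\beta_2(n)$ (the number of entries equivalent to a given one), while saving at least one power of $n$ from the index-matching constraint. Combined with the uniform moment bound \eqref{mombound} on the individual joint cumulants of matrix entries, the counting yields $c_r = \calo(n^{-(r-2)} \beta_2(n)^{r-1})$, which under $\beta_2(n) = \calo(n^\epsilon)$ is $o(1)$ once $\epsilon$ is chosen small enough. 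Joint Gaussianity then follows as soon as the covariance bounds from part (b) are in hand.

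For part (b), the central task is to compute $\on{Cov}(\tr(\W_n^j), \tr(\W_n^k))$ to leading order. Expanding as above produces a double sum over pairs of walks weighted by the joint cumulant $c_2$ of the two corresponding products. I would stratify this sum according to the equivalence-class pattern induced by $\sim_n$ on the $2(j+k)$ entries. The dominant stratum is that of pair partitions in which each class consists of two equal index pairs $(p,q)$, one from each walk: this is exactly the contribution arising in the independent case, and its leading term is identified combinatorially in Sections \ref{cov1} and \ref{red-multiind} with $U_n(2j)\,\delta_{jk}$ once one passes from the monomials $x^k$ to the shifted Chebyshev polynomials $\Gamma^y_k(\,\cdot\,,\sigma)$. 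Every other stratum either contains a class of size greater than two, or an equivalence between two distinct index pairs; each such deviation from the baseline pattern costs a factor of $\beta_2(n)^\eta$ in the count but is compensated by a saving of order $n^2$ arising through the column- and row-coincidence quantity $\beta_0(n)$. The hypothesis $\beta_0(n) \beta_2(n)^\eta = o(n^2)$ thus kills these strata in the limit, leaving only the independent-case pair partitions.

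The main obstacle, as anticipated by the paper's section layout, is the bookkeeping in the covariance argument: one has to enumerate every $\sim_n$-pattern on the $2(j+k)$ entries of a pair of walks, evaluate the precise $n$-power and $\beta$-factor for each, and verify the dichotomy that any extra equivalence beyond the pair-partition skeleton is absorbed by the factor $\beta_0(n)\beta_2(n)^\eta/n^2$. Once this reduction is complete, the final diagonalization amounts to the classical orthogonality of shifted Chebyshev polynomials of the first kind with respect to the Mar\v{c}enko--Pastur weight, developed in Section \ref{chebyshev}.
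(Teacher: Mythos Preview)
Your high-level strategy matches the paper's: prove Gaussianity via vanishing of higher cumulants, and compute the covariance by stratifying over the partitions that $\sim_n$ induces on the index set. But two key technical ingredients are missing, and without them the argument does not close.

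For part (a), the bound $c_r = \calo(n^{-(r-2)}\beta_2(n)^{r-1})$ does not follow from the counting you describe. The naive estimate (choose the first pair freely, then one coordinate per step along each circle, paying $\beta_2$ only when a previously visited block is re-entered) yields $\#\on{M}_n(\pi) = \calo(n^{\#\pi+1+\epsilon})$; for a connected pair partition $\#\pi = k$, so after the prefactor $n^{-k}$ you are left with $\calo(n^{1+\epsilon})$, which diverges. The paper's remedy is the \emph{simple connector} argument (Lemma \ref{jge3}): the cyclic closure of each circle forces the last pair $P_{i,2k_i}$ to be determined by (B2), and if $(i,2k_i)$ is a simple connector its block has not yet been visited, so a full factor of order $n$ is saved rather than merely $\beta_2$. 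One simple connector brings the bound down to $\calo(n^{\#\pi+\epsilon})$; for $r\ge 3$ a connected pair partition necessarily contains a circle linked to two \emph{distinct} other circles, and routing the construction through that circle produces two such savings, hence $\calo(n^{\#\pi-1+\epsilon})$. It is this structural observation, not the raw inter-walk link count, that makes the higher cumulants vanish.

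For part (b), your description of the dominant stratum is inaccurate. The surviving partitions are not those in which every class joins the two walks; they are the \emph{dihedral non-crossing} partitions $\cald\caln\calp\calp^{2m}_{[2k_1]\cup[2k_2]}$, which have exactly $2m$ connectors on each circle while the remaining $k_i-m$ pairs lie entirely \emph{within} circle $i$ (and for $m=1$ the connectors form a $4$-block, so these are not even pair partitions). Consequently the covariance in the monomial basis is $\sum_m \sigma^{2k-4m} G_{k_1,m} G_{k_2,m} U_n(2m)$, and the diagonal form $\delta_{jk} U_n(2j)$ appears only after one identifies the weighted half-pair-partition count $G_{k,m}$ with the inverse Chebyshev coefficient $g_{k,m}$ (the content of the appendix). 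You also misidentify the role of $\beta_0$: it is not a generic penalty for ``extra equivalences'' but enters specifically in the nearest-neighbor elimination step (Lemma \ref{nextneighborelim}), controlling those multi-indices $P$ with $P_{i,l}\neq P_{i,l+1}$ despite $(i,l)\sim_\pi(i,l+1)$. These are bounded by $\beta_0(n)$, with the remaining choices absorbing powers of $\beta_2(n)$, which is precisely why the hypothesis reads $\beta_0(n)\beta_2(n)^\eta = o(n^2)$.
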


\section{Some facts about cumulants}
\label{cumulants}

Let $X_1,\ldots,X_j$ be random variables defined on a common probability space with moments of all orders. Then, the characteristic function
\begin{equation*}
\varphi_{X_1,\ldots,X_j}(t_1,\ldots,t_j) = \E\left(\exp\left(\sum_{l=1}^j i t_l X_l \right)\right)
\end{equation*}

is infinitely differentiable in $t_1,\ldots,t_j$. The \emph{joint cumulant} $\on{C}_j(X_1,\ldots,X_j)$ is defined as
\begin{equation*}
\on{C}_j(X_1,\ldots,X_j) = \left. i^{-j} \frac{\partial^j}{\partial t_1\cdots \partial t_j} \log \varphi_{X_1,\ldots,X_j}(t_1,\ldots,t_j) \right|_{t_1=\ldots=t_j=0}.
\end{equation*}

In particular, we have
\begin{equation*}
\on{C}_1(X_1)=\E X_1, \quad \on{C}_2(X_1,X_1)=\on{Var} X_1, \quad \on{C}_2(X_1,X_2)=\on{Cov}(X_1,X_2).
\end{equation*}

In general, the joint cumulant can be expressed in terms of the mixed moments of $X_1,\ldots,X_j$. Specifically, we have the moment-cumulant formula
\begin{equation}
\on{C}_j(X_1,\ldots,X_j) = \sum_{\pi\in\calp(j)} (-1)^{\#\pi-1} (\#\pi-1)! \prod_{l=1}^{\#\pi} \E\left(\prod_{i\in B_l(\pi)} X_i\right),
\label{momcum}
\end{equation}

where $\calp(j)$ is the set of all partitions of $\{1,\ldots,j\}$, and for $\pi\in\calp(j)$, $\#\pi$ is the number of blocks of $\pi$, 
which are denoted by $B_1(\pi),\ldots,B_{\#\pi}(\pi)$ (cf. \cite[II.12]{ShiryayevBook}). The formula above immediately implies that the joint cumulant is symmetric and multilinear. The following two lemmata will be frequently used in the subsequent sections.

\begin{lemma}
If there is a partition of $\{1,\ldots,j\}$ into two nonempty subsets $M$ and $N$ such that the families $\{X_l, l\in M\}$ and $\{X_l, l\in N\}$ are independent, then $\on{C}_j(X_1,\ldots,X_j)=0$.
\label{cumind}
\end{lemma}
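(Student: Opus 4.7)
The plan is to argue directly from the characteristic-function definition of the joint cumulant rather than from the moment--cumulant formula \eqref{momcum}, since independence translates most transparently into a factorization at the level of $\varphi_{X_1,\ldots,X_j}$.

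First I would use the hypothesis that $\{X_l : l \in M\}$ and $\{X_l : l \in N\}$ are independent families to factor the characteristic function: writing $t_M = (t_l)_{l \in M}$ and $t_N = (t_l)_{l \in N}$, one has
\begin{equation*}
\varphi_{X_1,\ldots,X_j}(t_1,\ldots,t_j) \;=\; \varphi_M(t_M)\, \varphi_N(t_N),
\end{equation*}
where $\varphi_M$ and $\varphi_N$ denote the joint characteristic functions of the respective subfamilies. Taking logarithms in a neighborhood of the origin (where $\varphi_{X_1,\ldots,X_j}$ does not vanish, so that $\log$ is well defined and smooth) gives
\begin{equation*}
\log \varphi_{X_1,\ldots,X_j}(t_1,\ldots,t_j) \;=\; \log \varphi_M(t_M) \;+\; \log \varphi_N(t_N).
\end{equation*}

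Next I would apply $\frac{\partial^j}{\partial t_1 \cdots \partial t_j}$ to both sides and evaluate at the origin. Since $M$ and $N$ are both nonempty, the summand $\log \varphi_M(t_M)$ does not depend on the variables $t_l$ for $l \in N$; differentiating with respect to any such $t_l$ annihilates it, and the mixed derivative therefore vanishes identically. The same argument, with the roles of $M$ and $N$ exchanged, kills $\log \varphi_N(t_N)$. Multiplying by $i^{-j}$ and evaluating at $t_1 = \cdots = t_j = 0$ then yields $\on{C}_j(X_1,\ldots,X_j) = 0$, as claimed.

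There is no real obstacle here: the only minor point worth addressing explicitly is the justification that $\log \varphi_{X_1,\ldots,X_j}$ is smooth in a neighborhood of the origin, which follows from the continuity of $\varphi_{X_1,\ldots,X_j}$ together with $\varphi_{X_1,\ldots,X_j}(0) = 1$, and from the assumed existence of all moments, which ensures smoothness of $\varphi_{X_1,\ldots,X_j}$ and hence of its logarithm near $0$. A purely combinatorial proof via \eqref{momcum}, grouping partitions $\pi \in \calp(j)$ according to whether they refine the partition $\{M,N\}$, is also feasible but strictly more work; the characteristic-function argument above is preferable.
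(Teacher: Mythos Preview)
Your proof is correct and follows essentially the same route as the paper's: factor the joint characteristic function via independence, take logarithms, and observe that the mixed partial $\partial^j/\partial t_1\cdots\partial t_j$ annihilates each summand because neither depends on all variables. The only cosmetic difference is that the paper first invokes symmetry of the cumulant to relabel so that $M=\{1,\ldots,l\}$ and $N=\{l+1,\ldots,j\}$, whereas you work directly with arbitrary $M,N$.
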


\begin{proof}
As a consequence of the symmetry, we may assume without loss of generality that $M=\{1,\ldots,l\}$ and $N=\{l+1,\ldots,j\}$ for some $l=1,\ldots,j-1$. Due to the independence, we have $\varphi_{X_1,\ldots,X_j}(t_1,\ldots,t_j) = \varphi_{X_1,\ldots,X_l}(t_1,\ldots,t_l)\varphi_{X_{l+1},\ldots,X_j}(t_{l+1},\ldots,t_j)$. It thus follows that
\begin{multline*}
\frac{\partial^j}{\partial t_1\cdots \partial t_j} \log \varphi_{X_1,\ldots,X_j}(t_1,\ldots,t_j) \\
= \frac{\partial^j}{\partial t_1\cdots \partial t_j} \log \varphi_{X_1,\ldots,X_l}(t_1,\ldots,t_l) + \frac{\partial^j}{\partial t_1\cdots \partial t_j} \log \varphi_{X_{l+1},\ldots,X_j}(t_{l+1},\ldots,t_j) = 0,
\end{multline*}
implying $\on{C}_j(X_1,\ldots,X_j)=0$.
\end{proof}

\begin{lemma}
The vector $(X_1,\ldots,X_j)$ has a Gaussian joint distribution if and only if $\on{C}_l(X_{i_1},\ldots,X_{i_l})=0$ for any $l\geq 3$ and $i_1,\ldots,i_l\in\{1,\ldots,j\}$. The distribution is non-degenerate if and only if $\on{C}_2(X_i,X_i)>0$ for any $i\in\{1,\ldots,j\}$.
\end{lemma}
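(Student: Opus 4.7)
The plan is to prove the two directions separately and then handle the non-degeneracy part at the end.

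For the ``only if'' direction, I would argue as follows. If $(X_1,\ldots,X_j)$ is jointly Gaussian, then any subvector $(X_{i_1},\ldots,X_{i_l})$ is also jointly Gaussian (as a linear image of a Gaussian vector), so its characteristic function has the explicit form $\varphi_{X_{i_1},\ldots,X_{i_l}}(t_1,\ldots,t_l) = \exp\bigl(i\sum_r t_r \E X_{i_r} - \tfrac{1}{2}\sum_{r,s} t_r t_s \on{Cov}(X_{i_r},X_{i_s})\bigr)$. Hence $\log \varphi_{X_{i_1},\ldots,X_{i_l}}$ is a polynomial of total degree $2$ in the $t_r$, so every mixed partial derivative of order $l \ge 3$ vanishes identically. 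By definition of the joint cumulant, $\on{C}_l(X_{i_1},\ldots,X_{i_l}) = 0$.

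For the ``if'' direction the idea is to reduce to a one-dimensional statement via the Cram\'er--Wold device. Fix $t = (t_1,\ldots,t_j) \in \rr^j$ and let $Y := \sum_{l=1}^j t_l X_l$. The multilinearity of the joint cumulant, which is immediate from the moment-cumulant formula \eqref{momcum}, yields
\begin{equation*}
\on{C}_k(Y,\ldots,Y) = \sum_{i_1,\ldots,i_k=1}^j t_{i_1}\cdots t_{i_k}\, \on{C}_k(X_{i_1},\ldots,X_{i_k})
\end{equation*}
for every $k\ge 1$. By hypothesis the right-hand side vanishes for all $k\ge 3$. Now invert \eqref{momcum}: the moments of $Y$ (which all exist because the $X_l$ have moments of every order) are determined by the cumulants of $Y$ through a universal polynomial identity, so they must coincide with the moments of a Gaussian random variable with mean $\on{C}_1(Y)$ and variance $\on{C}_2(Y,Y)$. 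Since such a Gaussian is uniquely determined by its moments, $Y$ itself is Gaussian. As this holds for every $t\in\rr^j$, the Cram\'er--Wold theorem gives that $(X_1,\ldots,X_j)$ is jointly Gaussian.

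Finally, for the non-degeneracy claim, I would note that by the previous part each marginal $X_i$ is already Gaussian, and a one-dimensional Gaussian is non-degenerate in the sense of being non-constant exactly when its variance $\on{C}_2(X_i,X_i)$ is strictly positive. Thus the joint distribution has no degenerate component if and only if $\on{C}_2(X_i,X_i)>0$ for every $i$. The only subtle point in the whole argument is the ``if'' direction: one must be careful not to assume that the Taylor series of $\log\varphi$ converges, which is why the argument is routed through Cram\'er--Wold and the purely algebraic moment-cumulant inversion rather than through a direct Taylor expansion of $\log\varphi_{X_1,\ldots,X_j}$.
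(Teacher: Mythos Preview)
Your argument is correct. For the ``only if'' direction you do exactly what the paper does: write down the Gaussian characteristic function, take the logarithm, and observe that it is a quadratic polynomial so all third and higher mixed partials vanish.

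For the ``if'' direction your route differs from the paper's. The paper argues in one sentence, directly at the multivariate level: the joint cumulants determine the joint moments via the inverse of \eqref{momcum}, so if all cumulants of order $\ge 3$ vanish the mixed moments of $(X_1,\ldots,X_j)$ agree with those of the Gaussian vector having the same mean and covariance, and since the multivariate Gaussian is determined by its moments the distributions coincide. You instead pass to a single linear combination $Y=\sum t_l X_l$, use multilinearity to kill the higher cumulants of $Y$, invoke moment-cumulant inversion and moment-determinacy in one dimension, and then close with Cram\'er--Wold. Both approaches are valid; yours is a bit longer but has the advantage of making the moment-determinacy step completely standard (one-dimensional Gaussian), and your closing remark about avoiding convergence issues for the Taylor series of $\log\varphi$ is a good point that the paper does not make explicit. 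The non-degeneracy clause is handled the same way in both, with ``non-degenerate'' read as ``no constant marginal,'' which is the sense used later in Theorem~\ref{maintheo}(a).
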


\begin{proof}
It is well-known that a Gaussian vector $(X_1,\ldots,X_j)$ with mean vector $\mu$ and covariance matrix $\Sigma$ satisfies
\begin{equation*}
\varphi_{X_1,\ldots,X_j}(t_1,\ldots,t_j) = \exp\left(it^T \mu - \frac{1}{2} t^T\Sigma t\right),
\end{equation*}
where $t^T=(t_1,\ldots,t_j)$. Thus, any partial derivative of 
\begin{equation*}
\log \varphi_{X_1,\ldots,X_j}(t_1,\ldots,t_j) = it^T \mu - \frac{1}{2} t^T\Sigma t
\end{equation*}
of order greater or equal to $3$ vanishes. On the other hand, the Gaussian distribution is uniquely determined by its moments and consequently, by its cumulants.
\end{proof}

\section{The asymptotic vanishing of higher cumulants}
\label{initial}
To establish a Gaussian limit, we will show that the joint cumulants of order $j \ge 3$ of $(\tr(\W_n^{k}))_{k \in \nn}$ asymptotically vanish
as $n \to \infty$. As a first step towards this goal, we expand each trace $\tr(\W_n^{k})$ in a way that makes it possible to exploit the 
information that is available about the dependence structure among the entries of $\Y_n$. We obtain
$$ \tr(\W^k) = \frac{1}{n^k} \sum_P a(P_1) \overline{a(P_2)} \cdot \ldots \cdot a(P_{2k-1}) \overline{a(P_{2k})},$$
where the sum is over all families $P = (P_l)_{l = 1, \ldots, 2k}$ of pairs $P_l = (p_l, q_l)$ that satisfy
\begin{itemize}
\item $P_l \in [s] \times [t]$ for all $l = 1, \ldots, 2k$ and
\item $p_{2l} = p_{2l+1}$ and $q_{2l-1} = q_{2l}$ for all $l = 1, \ldots, k$, where $2k + 1$ is cyclically identified with $1$.
\end{itemize}
Here we have dropped explicit reference to the dependence of $s(n), t(n)$ on $n$.  
By the multilinearity of cumulants, this expansion implies that for $k_1, \ldots, k_j \in \nn,\ k := k_1 + \ldots + k_j$ one has
\begin{align} \on{C}_j(\tr(\W^{k_1}),& \ldots, \tr(\W^{k_j}))\nonumber \\ \label{cumul-summe}&= \frac{1}{n^k} \sum_P \on{C}_j\left( \prod_{l=1}^{k_1} a(P_{1, 2l-1}) \overline{a(P_{1, 2l})}, \ldots,
\prod_{l=1}^{k_j} a(P_{j, 2l-1}) \overline{a(P_{j, 2l})}\right),\end{align}
where the sum is now over all doubly indexed families $P_{i, l}\ (i = 1, \ldots, j; l = 1, \ldots, 2k_i)$ satisfying
\begin{itemize}
\item[(B1)] $P_{i,l} \in [s] \times [t]$ for all $i = 1, \ldots, j;\ l = 1, \ldots, 2k_i$ and
\item[(B2)] $p_{i, 2l} = p_{i, 2l+1}$ and $q_{i, 2l-1} = q_{i, 2l}$ for all $i = 1, \ldots, j;\ l = 1, \ldots, k_i$, where $2k_i + 1$ is cyclically identified with $1$.
\end{itemize}
Write $$ \calm := \{ (i, l)\ |\ i = 1, \ldots, j;\ l = 1, \ldots, 2 k_j\}.$$ This means that the sum in \eqref{cumul-summe} is over all maps $P: \calm \to [s] \times [t]$
such that (B2) is satisfied.  For fixed $i$, in accordance with \cite{Schenker_Schulz-Baldes2}, we will sometimes refer to the family $(i, 1), (i, 2), \ldots, (i, 2k_i)$ as the $i$-circle of $\calm$.\\

Now, on $[s] \times [t]$ one has the equivalence relation $\sim_n$ that governs the dependence structure of the random matrix $\Y_n = (a_n(p, q)).$ So any map $P: \calm \to
[s] \times [t]$ induces an equivalence relation $\sim_P$ on $\calm$ via
\begin{equation}
(i, l) \sim_P (i\tra, l\tra) \quad :\Longleftrightarrow\quad P_{i, l} \sim_n P_{i\tra, l\tra}.
\end{equation}
In the sum in \eqref{cumul-summe}, we will group the maps $P$ together according to which partition they induce on $\calm$. Write $\calp(\calm)$ for the
set of all partitions of $\calm$, and for any $\pi \in \calp(\calm)$ denote by $\on{M}_n(\pi)$ the set of all $P: \calm \to [s] \times [t]$ such that 
the equivalence classes of $\sim_P$ form the partition $\pi$. Then 
\eqref{cumul-summe} reads
\begin{align} &\on{C}_j(\tr(\W^{k_1}), \ldots, \tr(\W^{k_j})) \label{cumul-summe-partit}\\
&\quad= \frac{1}{n^k} \sum_{\pi \in \calp(\calm)} \sum_{P \in \on{M}_n(\pi)} \on{C}_j\left( \prod_{l=1}^{k_1} a(P_{1, 2l-1}) \overline{a(P_{1, 2l})}, \ldots,
\prod_{l=1}^{k_j} a(P_{j, 2l-1}) \overline{a(P_{j, 2l})}\right).\nonumber \end{align}

If for any choice of nonempty disjoint subsets $M, N\subset[j]$, $M\cup N=[j]$, there are $i\in M$, $l \in [2k_i]$, $i'\in N$ and $l\tra \in [2k_{i\tra}] $ such that $(i, l) \sim_{\pi} (i\tra, l\tra)$, we say that $\pi$ is
{\it connected} and write $\pi \in \calp^c(\calm)$. If $\pi$ is not connected, then we can find a partition of  $[j]$
into nonempty subsets $M, N$ such that 
$$ \left\{ \prod_{l=1}^{k_{\mu}}  a(P_{\mu, 2l-1}) \overline{a(P_{\mu, 2l})}:\ \mu \in M\right\}$$ and 
$$ \left\{ \prod_{l=1}^{k_{\nu}}  a(P_{\nu, 2l-1}) \overline{a(P_{\nu, 2l})}:\ \nu \in N\right\}$$
are independent. By Lemma \ref{cumind} above, this implies that for any non-connected $\pi$ the corresponding summand in  \eqref{cumul-summe-partit}
vanishes. We may thus restrict the sum to connected partitions.\\

We call $(i, l) \in \calm$ a {\it connector} of $\pi \in \calp^c(\calm)$ if there exist $i\tra \neq i$ and $l\tra \in [2k_{i\tra}]$ such that $(i, l) \sim_{\pi} (i\tra, l\tra)$.
It is a {\it simple connector} if, in addition, $(i, l) \not\sim_{\pi} (i, l\tra)$ for all $l\tra \in [2k_i],\ l\tra \neq l.$
\\

\begin{lemma}
\label{jge3}
Let $\pi \in \calp^c(\calm),\ j \ge 3,$ and assume that $\beta_2(n) = \mathcal{O}\left(n^\vep\right)$ for all $\epsilon >0$. Then, as $n \to \infty$,
\begin{align*}
\frac{1}{n^k} \calc_j(\pi) &:= \frac{1}{n^k} \sum_{P \in \on{M}_n(\pi)} \calc_j(P)\\
&:= \frac{1}{n^k} \sum_{P \in \on{M}_n(\pi)} \on{C}_j\left( \prod_{l=1}^{k_1} a(P_{1, 2l-1}) \overline{a(P_{1, 2l})}, \ldots,
\prod_{l=1}^{k_j} a(P_{j, 2l-1}) \overline{a(P_{j, 2l})}\right)\nonumber\\ &= o(1).\nonumber
\end{align*}
\end{lemma}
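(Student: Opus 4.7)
I would reduce the lemma to the combinatorial estimate $|\on{M}_n(\pi)| = o(n^k)$ and then construct valid $P$ circle by circle along a spanning tree of connectors, exploiting the connectedness of $\pi$ and the bound $\beta_2(n) = \calo(n^\vep)$.

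First, by the moment-cumulant formula \eqref{momcum} together with \eqref{mombound} and H\"older's inequality applied block-by-block, every expectation in the expansion of $\calc_j(P)$ is bounded by a constant depending only on $k_1, \ldots, k_j$. Hence $|\calc_j(P)| \le K$ uniformly in $P \in \on{M}_n(\pi)$ and in $n$, and the lemma reduces to $|\on{M}_n(\pi)| = o(n^k)$. A useful further restriction on $\pi$ is provided by the Leonov-Shiryaev cumulant-of-products formula: expanding $\calc_j(X_1, \ldots, X_j)$ with $X_i = \prod_l a(P_{i,2l-1})\overline{a(P_{i,2l})}$ as a sum over partitions $\sigma$ of $\calm$ that are irreducible with respect to the circle decomposition, centeredness of the $a_n(\cdot)$ kills singleton $\sigma$-blocks, while Lemma \ref{cumind} kills any block whose entries split into two $\sim_n$-independent subfamilies. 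Since $\sim_n$ is an equivalence relation, the only surviving $\sigma$'s have every block contained in a single $\sim_n$-class, so $\sigma$ refines $\pi$. In particular each $\pi$-block has size at least $2$, hence $|\pi| \le k$.

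For the counting itself, pick a spanning tree $T$ in the graph on $\{1, \ldots, j\}$ whose edges encode the connectors of $\pi$; such a tree exists because $\pi \in \calp^c(\calm)$ and has exactly $j - 1$ edges. Process the circles in an order $i_1, \ldots, i_j$ compatible with $T$. For the initial circle I would apply the classical ``genus-zero'' tree-walk bound for a closed bipartite walk of length $2k_{i_1}$, i.e., the $n^{k_{i_1} + 1}$ estimate underlying the single-trace Mar\v{c}enko-Pastur count, augmented here by absorbing each internal $\sim_n$-identification into a factor $\beta_2(n)^{\calo(1)} = n^{\calo(\vep)}$; this yields at most $\calo(n^{k_{i_1} + 1 + \vep})$ valid assignments. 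For each subsequent circle $i_m$, the chosen tree connector forces one pair $P_{i_m, l^\ast}$ to lie in a prescribed $\sim_n$-equivalence class of size $\le \beta_2(n) = \calo(n^\vep)$, replacing $n^2$ free choices by $\calo(n^\vep)$, and the remainder of the circle (now with two tree-vertices pre-assigned) contributes at most $\calo(n^{k_{i_m} - 1 + \vep})$. Multiplying,
\begin{equation*}
|\on{M}_n(\pi)| \le K' \cdot \beta_2(n)^{j-1} \cdot n^{k - (j - 2)} = \calo\bigl(n^{k - (j - 2) + (j-1)\vep}\bigr),
\end{equation*}
which is $o(n^k)$ whenever $j \ge 3$ and $\vep$ is sufficiently small.

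The main obstacle is verifying the per-circle ``genus-zero'' bound $\calo(n^{k_i + 1 + \vep})$ under arbitrary internal $\pi$-identifications. In the independent case this is the standard assertion that a closed bipartite walk of length $2k_i$ visits at most $k_i + 1$ distinct vertices and that leading contributions come from non-crossing pair partitions. In the dependent setting one must verify that each internal $\sim_n$-identification contributes only a factor $\beta_2(n) = \calo(n^\vep)$ rather than a full factor of $n$, so that the tree-walk maximum remains the correct upper bound up to $n^{\calo(\vep)}$. The $o(1)$ slack in the final estimate is then supplied by the $j - 1 \ge 2$ spanning-tree connectors, each delivering a factor $\beta_2(n)/n^2 = \calo(n^{\vep - 2})$ relative to two independent copies of the single-trace bound.
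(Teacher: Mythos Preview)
Your reduction to a combinatorial bound and your use of the moment--cumulant formula to exclude singleton blocks are fine, and match the paper's first step. The genuine gap is the per-circle ``genus-zero'' bound $\calo(n^{k_i + 1 + \epsilon})$, which is false for circles carrying connectors. The assertion that ``a closed bipartite walk of length $2k_i$ visits at most $k_i + 1$ distinct vertices'' holds only for backtracking walks in which every edge is traversed an even number of times; a generic closed walk of length $2k_i$ may visit up to $2k_i$ distinct vertices. On your initial circle the connector points are as yet unconstrained, so their edges are traversed only once and the tree-walk bound does not apply. Concretely, take $j = 3$, $k_1 = k_2 = k_3 = 2$, and let $\pi$ be a connected pair partition in which every point of $\calm$ is a connector. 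The first circle then admits $s(n)^2 t(n)^2 = \Theta(n^4)$ choices, not $\calo(n^{k_1+1}) = \calo(n^3)$; and if you impose only the $j - 1 = 2$ spanning-tree constraints on the remaining circles (each contributing $\beta_2(n)\cdot s(n)t(n)$ rather than $\calo(n^{k_i-1})$), the total count is $\Theta(n^{8 + \epsilon})$, not $o(n^6)$. So both the per-circle estimates and the final product bound are incorrect.

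The paper's argument avoids this by never decoupling the circles. It walks through all of $\calm$ sequentially, paying a factor $s \vee t$ each time a \emph{new} $\pi$-block is entered and only $\beta_2$ when an old one is revisited, which yields the global bound $\#\on{M}_n(\pi) = \calo(n^{\#\pi + 1 + \epsilon})$. This already disposes of $\#\pi \le k - 2$. For the borderline cases $\#\pi \in \{k-1, k\}$ the paper exploits a structural fact specific to $j \ge 3$: such a $\pi$ must contain a \emph{simple connector}, i.e., a connector whose block contains no other point of its own circle. Arranging the sequential construction so that this point is the last one reached on its circle, it is fully determined by (B2) while its block is still fresh, saving a full factor of $n$; for the pair-partition case $\#\pi = k$ with $j \ge 3$ one can find two such savings through a single intermediate circle. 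Your spanning-tree intuition---connectedness should cost degrees of freedom---is the right one, but the savings come from locating points where (B2) forces a value whose $\pi$-block has not yet been visited, not from a per-circle vertex count.
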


\begin{proof}
If $\pi$ has a singleton block $\{ (i, l)\},$ then $a(P_{i, l})$ is a centered random variable that is independent of all other $a(P_{i\tra, l\tra})$. So the expectation of any product of matrix entries that contains the term $a(P_{i, l})$ vanishes. Hence, by the moment-cumulant formula \eqref{momcum}, $\calc_j(\pi) = 0.$ Consequently, we need only consider partitions $\pi$ with $k = k_1 + \ldots + k_j$ blocks or less. \\

Now we wish to find an upper bound for the cardinality of $\on{M}_n(\pi)$. To this end, we set out to construct an arbitrary sequence $P = (P_{il})_{i = 1, \ldots, j;\ l = 1, \ldots, 2k_i} \in 
\on{M}_n(\pi)$, starting with $P_{11} = (p_{11}, q_{11})$. There are $s\cdot t$ possible choices for this pair. Coming to $P_{12}$, $q_{12}$ is already determined by (B2). As to
$p_{12}$, we have to consider two cases: $(1,1)$ and $(1,2)$ may or may not belong to the same block of $\pi$. In the first case, we have at most $\beta_3$ choices for $p_{12}$,
in the second case at most $s$. Proceeding to $(1, 3)$, this time it is $p_{13}$ that is fixed by (B2), and for $q_{13}$ we have $\beta_3$ or
$t$ choices according to whether or not a new block of $\pi$ is reached. In this manner we proceed cyclically along $\{ 1\} \times [2k_1]$ until we reach $(1, 2k-1)$,
where the corresponding pair $P_{1, 2{k_1}}$ is already fixed by (B2) and the cyclic identification of $2k_1 + 1$ with $1$. Since $P_{1, 2{k_1}}$ 
may or may not belong to a block that has been reached before, this observation only reduces our upper bound by a factor which is $\calo(n^{\epsilon})$ for all $\epsilon > 0$. 
Below we will encounter a situation in which the bound is reduced more substantially.

Returning to the present bounding exercise, as $\pi$ is a connected partition of $\calm$, we have already fixed the pair $P_{1l} = (p_{1l}, q_{1l})$ for a connector $(1, l)$ and thus imposed restrictions on the choice of the
pair $(p_{i, l\tra}, q_{i, l\tra})$ for some $i \neq 1$ and $l\tra \in [2k_i]$. In fact, there are at most $\beta_2$ choices for this $(p_{i, l\tra}, q_{i, l\tra})$. Moving
cyclically along $\{i\} \times [2k_i]$ we proceed as above, then turning to the remaining circles $\{ i\tra\} \times [2k_{i\tra}]$, all of which may be reached via connectors since 
$\pi$ was assumed to be connected. Since $\beta_3 \le \beta_2$, we thus may bound
\begin{equation}
\label{bound1}
\# \on{M}_n(\pi) \le s(n) t(n) (s(n) \vee t(n))^{\# \pi - 1}\ \beta_2(n)^{2k - \# \pi} = \calo\left( n^{\# \pi + 1 + \epsilon}\right)\ \forall \epsilon > 0.
\end{equation}

By the moment-cumulant formula \eqref{momcum}, H\"older's inequality, and the uniform bound \eqref{mombound} one sees that
$$ \sup_n \sup_{P \in \on{M}_n(\pi)} |\calc_j(P)| < \infty,$$
and in view of the prefactor $n^{-k}$, this implies that for $\pi$ to contribute to the limit it is necessary that $\# \pi \ge k-1$. So 
we have shown that it suffices to consider connected $\pi$ with exactly $k$ or $k-1$ blocks.\\

We are now going to find further necessary conditions for such a $\pi$ to give a nonzero contribution to the limit. 
We have seen that partitions with a singleton block do not contribute to the sum. So, if $\# \pi = k$, $\pi$ must be a {\it pair partition},
in the sense that all blocks of $\pi$ consist of exactly two elements. If $\# \pi = k-1$, either $\pi$ has two $3$-blocks and $k-3$ pairs, or one $4$-block and
$k-2$ pairs. We claim that in all three cases $\pi$ has a simple connector. In fact, $\pi$ was assumed to be connected, and if it is a pair partition, then all 
connectors have to be simple. If $\pi$ has only blocks of size $2$ or $3$ and if $(i, l), (i\tra, l\tra)$ with $i \neq i\tra$ belong to the same block  of $\pi$, then one of
$(i, l)$ and $(i\tra, l\tra)$ is simple. Finally, consider the case that $\pi$ has a single $4$-block and is otherwise a pair partition. If there is a $2$-block that
connects two different circles, then it consists of simple connectors. Otherwise, for $\pi$ to be connected it is necessary that the $4$-block connect all circles. Invoking
now the assumption that $j \ge 3$, we see that at least two elements of the $4$-block must be simple connectors. \\

Now suppose (possibly after relabeling) that $(1, 2k_1)$ is a simple connector of $\pi$. If we construct $P$ as above, starting  with $(1, 1)$ and proceeding along $\{1\} \times[2k_1]$,
$(p_{1, 2k_1}, q_{1, 2k_1})$ is fixed by $(B2)$ and the cyclic identification of $2k_1 + 1$ with $1$. This time, in contrast to the above argument that led to the bound \eqref{bound1},
the fact that $(1, 2k_1)$ is a simple connector guarantees that it is in a block that has not yet been reached, and so our upper bound gets reduced by a factor of order $n$. In total,
the block of $(1, 2k_1)$ contributes a factor of order $\beta_2(n)$, and we end up with the bound
$$ \#\on{M}_n(\pi) \le s(n) t(n) (s(n) \vee t(n))^{\#\pi - 2} \beta_2(n)^{2k - \# \pi + 1} = \calo\left(n^{\# \pi + \epsilon}\right).$$

In view of the prefactor $n^{-k}$, partitions $\pi$ with $\# \pi = k-1$ do not contribute to the limit. If $\# \pi = k$, then $\pi$ is a connected pair partition,
and since $ j \ge 3$, there is a circle that is connected to two distinct circles by simple connectors. Without loss of generality, assume that this circle is $\{2\} \times [2 k_2]$ 
and that it is connected to $\{1\} \times [2k_1]$ and $\{3\} \times [2k_3]$ via the $2$-blocks $\{ (2, 1), (1, 2k_1) \}$ and $\{ (2, l^*), (3, 1)\}$. Repeating
the above counting exercise, one obtains that these blocks contribute a factor $\beta_2(n)$ rather than $\calo(n)$. So we arrive at $\# \on{M}_n(\pi) = 
\calo\left(n^{\# \pi - 1 + \epsilon}\right)$, yielding a vanishing contribution in the limit in view of the prefactor of order $n^{-k}$. \end{proof}
 
\section{The covariances: Reduction of partition types}
\label{cov1}
 
The next step is to study the covariances, i.e., the case $j = 2$. The proof of Lemma \ref{jge3} 
implies the asymptotic negligibility of the following types of partitions:
\begin{itemize}
\item Connected partitions with a singleton block.
\item Connected partitions $\pi$ with $\# \pi \le k-2$.
\item Connected partitions $\pi$ with $\# \pi = k-1$ and a simple connector. 
\end{itemize} 
So it remains to consider two types of partitions:
\begin{itemize}
\item[(PP1)] Connected pair partitions.
\item[(PP2)] Partitions $\pi$ with $\# \pi = k-1,$ consisting of $k-2$ pairs and one $4$-block. The $4$-block contains two elements of each of the two circles 
$\{ 1\} \times [2k_1]$ and $\{ 2\} \times [2k_2]$, and all of them are connectors. Each of the pairs consists of two elements from the same circle.
\end{itemize}
Following \cite{Schenker_Schulz-Baldes2} we denote the set of all partitions of type (PP1) or (PP2) by the slightly misleading symbol $\calp \calp^c_{[2k_1] \cup [2k_2]}$, 
even though partitions of type (PP2) are not pair partitions. \\

Within $\calp \calp^c_{[2k_1] \cup [2k_2]}$ we are going to identify further types of partitions which give vanishing contribution to the limit. An auxiliary 
notion that will be useful in what follows is an {\it interval}  on a circle $\{ i\} \times [2k_i]$. We cyclically identify $2k_i + 1$ with $1$ and set for $l \neq l\tra$ in 
$[2k_i]$:
$$ \{i \} \times ]l, l\tra[_i\quad := \begin{cases} \emptyset & \text{if $l\tra = l+1$},\\
\{i\} \times \{ l+1, \ldots, l\tra - 1\}& \text{if $l\tra \neq l+1$}. \end{cases} 
$$
Closed and half open intervals are defined in the obvious way. A partition $\pi \in \calp \calp^c_{[2k_1] \cup [2k_2]}$ is called {\it crossing} if there are 
$(i, l_1) \sim_{\pi} (i, l_2)$ and $m_1 \in \ ] l_1, l_2[_i,\ m_2 \in ]l_2, l_1[_i$ such that one of the following holds:
\begin{itemize}
\item[(Cross~1)]$\quad (i, m_1) \sim_{\pi} (i, m_2)$ 
\item[(Cross~2)]$\quad (i, m_1)$ and $(i, m_2)$ are connectors.
\end{itemize}
This notion can be visualized as follows: Draw the circle $[2k_1]$, and around it the circle $[2k_2]$. Now connect any two equivalent points by an internal path. A partition is non-crossing if this can be achieved without lines crossing each other (Figure \ref{crossfig}).

\begin{figure}[ht]
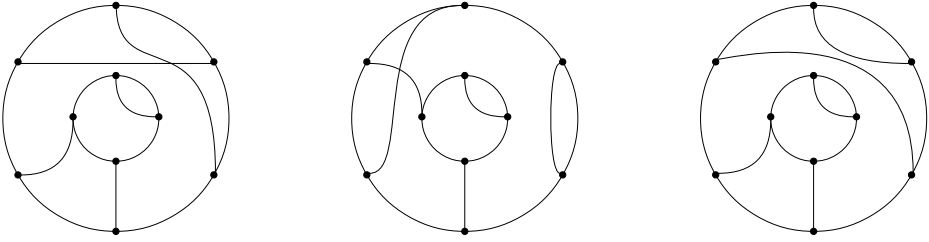

  \centering
  \begin{minipage}[b]{45mm}
    \includegraphics[trim = 4cm 0cm 4cm 0cm, width=45mm]{fig1.pdf}  
  \end{minipage}
  \begin{minipage}[b]{45mm}
    \includegraphics[trim = 4cm 0cm 4cm 0cm, width=45mm]{fig2.pdf}  
  \end{minipage}
  \begin{minipage}[b]{45mm}
    \includegraphics[trim = 4cm 0cm 4cm 0cm, width=45mm]{fig3.pdf}  
  \end{minipage}
 \caption{$k_1=2$, $k_2=3$. The first picture shows a crossing partition satisfying (Cross~1), the second a crossing partition satisfying (Cross~2), and the third one a non-crossing partition.}
 \label{crossfig}
\end{figure}

\begin{lemma}
\label{pp-crossing-negl}
Suppose that $\beta_2(n) = \calo(n^{\epsilon})\ \forall \epsilon > 0$ and let $\pi \in \calp \calp^c_{[2k_1] \cup [2k_2]}$ be crossing. Then
$$ \frac{1}{n^k} \# \on{M}_n(\pi) = o(1),$$
where $k = k_1 + k_2$. 
\end{lemma}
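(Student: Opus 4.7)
The plan is to refine the walk-counting from Lemma \ref{jge3} so as to extract an extra factor of $n^{1-\epsilon}$ whenever $\pi$ has a crossing. A generic (PP1) partition on two circles has at least one inter-circle (connector) block, hence simple connectors, and the argument of Lemma \ref{jge3} exploits this to obtain at best $\#\on{M}_n(\pi) = \calo(n^{k+\epsilon})$, which after the prefactor $n^{-k}$ is merely bounded. The goal here is to show that crossing improves this to $\#\on{M}_n(\pi) = \calo(n^{k-1+\epsilon})$, so that $n^{-k}\#\on{M}_n(\pi) = o(1)$.

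I would construct $P\in\on{M}_n(\pi)$ by walking cyclically around the circles as in Lemma \ref{jge3}, but now begin on the circle $i$ that carries the crossing, at a position chosen to highlight the interleaving. At each step one coordinate is pinned by (B2) or by the cyclic identification $2k_i + 1 \equiv 1$, while the other is either free (contributing $\calo(s\vee t)=\calo(n)$ choices when first visiting a $\pi$-block) or matching (contributing $\calo(\beta_3)=\calo(n^\epsilon)$ when revisiting one, or $\calo(\beta_2)$ at a connector entering a new circle). The central claim is that the interleaving forces one coordinate choice that the naive count classifies as "free" to be in fact of matching type. In case (Cross~1), starting the walk at $(i, l_1)$ and traversing past $m_1, l_2, m_2$, the (B2) chain of, say, $p$-values propagating through $l_2$ has, by the time $m_2$ is reached, already fixed one coordinate of $P_{i, m_2}$ up to an equivalence class of size $\beta_3$, reducing the expected $\calo(n)$ factor at an intermediate step to $\calo(\beta_3)$. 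In case (Cross~2), the simple connectors $(i, m_1)$ and $(i, m_2)$ pull their chains into other circles; the crossed block $\{(i, l_1), (i, l_2)\}$ then has both endpoints reached through chains whose free parameters have been consumed by the connector blocks, producing the analogous over-commitment.

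The main obstacle is to pin down which coordinate becomes redundant in each configuration, and to do so uniformly across (Cross~1) and (Cross~2) and the permitted starting points of the walk. A useful reformulation is to encode the walk as a bipartite skeleton graph whose vertices are the (B2)-chains of $p$- and $q$-indices on each circle and whose edges are the $\pi$-blocks: a non-crossing $\pi$ makes this graph a forest, while any crossing creates a cycle, and the first Betti number of the skeleton precisely counts the "free-to-matching" degradations. Converting this topological picture into the explicit walk-count, while also accounting for the connector-block contributions already used in the (PP1) bound, is the technical heart of the proof.
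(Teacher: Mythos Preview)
Your intuition is right—crossing must cost an extra factor of $n$—but the concrete mechanism you describe does not actually locate that saving, and the abstract one is left unexecuted.

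In your (Cross~1) sketch you propose a \emph{forward} walk through $l_1, m_1, l_2, m_2$. Look at what happens step by step: at $m_1$ the partner $m_2$ has not been seen, so the block is new and costs $\calo(n)$; at $l_2$ the partner $l_1$ has been seen, so it costs $\calo(\beta_3)$; at $m_2$ the partner $m_1$ has been seen, so again $\calo(\beta_3)$. None of these is ``an expected $\calo(n)$ reduced to $\calo(\beta_3)$ by the crossing''—the revisit costs at $l_2, m_2$ would be $\calo(\beta_3)$ in any pair partition, crossing or not. The forward walk simply does not see the interleaving. What the paper does instead is a \emph{pincer} walk: choose $P_{1,1},\ldots,P_{1,m_1-1}$ going forward, then jump to $P_{1,l}$ (at cost $\beta_2$ since $(1,1)\sim_\pi(1,l)$), then go \emph{backward} choosing $P_{1,l-1},\ldots,P_{1,m_1+1}$. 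Now $P_{1,m_1}$ is pinned on both sides by (B2), hence fully determined, while—and this is the claim one must check—the block of $(1,m_1)$ has not yet been visited. That is where the $\calo(n)$ factor is lost. You will need a short argument that $(1,m_1)$ is not $\pi$-equivalent to any other point of $\{1\}\times\,]1,l[_1$, which uses the (PP1)/(PP2) structure.

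This pincer gives $\#\on{M}_n(\pi)=\calo(n^{\#\pi+\epsilon})$. For (PP2) one has $\#\pi=k-1$ and this suffices. For (PP1) one has $\#\pi=k$ and a \emph{second} saving is required; the paper obtains it by showing that if there is no $4$-block then there must be a simple connector $(1,m_3)$ with $m_3$ in the \emph{complementary} arc $]l,1[_1$, whose block is again reached ``for free'' at the cyclic closure. Your proposal does not separate these two cases or identify the second saving.

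Your Betti-number reformulation is a legitimate alternative viewpoint and, done carefully, would yield the result; but as you yourself note, translating ``cycle in the skeleton graph'' into an explicit over-determined coordinate in the walk is precisely the work, and you have not done it. The pincer-walk argument is the concrete implementation of that idea.
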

\begin{proof}
By relabeling if necessary, we may assume that for a suitable $l \in [2k_1]$ we have that $(1, 1) \sim_{\pi} (1, l)$ and that there exist $m_1 \in \ ]1, l[_1,\
m_2 \in \ ]l, 1[_1$ such that (Cross 1) or (Cross 2) hold. We claim that 
\begin{equation}
\label{hilf-crossing}
(1, m_1)\ \text{is not equivalent under $\pi$ to any other point of } \{1\} \times \ ]1, l[_1. \end{equation}
This is because otherwise, there is $j \in \ ]1, l[_1,\ j \neq m_1$ such that $(1, m_1) \sim_n (1, j)$. By (PP1), (PP2), this implies that $(1, m_1) \not\sim_{\pi} (1, m_2)$,
hence (Cross2) must hold. So $(1, m_1)$ must be a connector, which implies that $(1, j)$ and $(1, m_1)$ are contained in a $4$-block. So $(1, m_2)$ should belong to a $2$-block which by
(PP2) consists only of elements of the first circle. So $(1, m_2)$ is not a connector, contradicting (Cross2) and thus proving  \eqref{hilf-crossing}.\\

We also claim that either
\begin{equation}
\label{hilf-crossing.2}
\text{$\pi$ contains a $4$-block}
\end{equation}
or 
\begin{equation}
\label{hilf-crossing.3}
\text{there is a simple connector $(1, m_3)$ with $m_3 \in \ ]l, 1[_1.$}
\end{equation}
Assume that \eqref{hilf-crossing.2} is not the case. If (Cross 2) holds, then \eqref{hilf-crossing.3} is satisfied with $m_3 = m_2$. If (Cross 1) holds, then there is a 
connector $(1, m_3)$. It is simple because $\pi$ is a pair partition, and $1 \neq m_3 \neq l$, because $\pi$ does not contain a $4$-block by assumption. We may 
assume that $m_3 \in \ ]l, 1[_1$, because otherwise we may resort to a cyclic relabeling that maps $(1, l)$ to $(1, 1)$, and hence $(1, 1)$ to $(1, 2k_1 - l + 2)$, 
and then consider $l\tra := 2k_1 - l + 2$ in the place of $l$.\\

Suppose now that $\pi \in \calp\calp^c_{[2k_1] \cup [2k_2]}$ is crossing and has properties \eqref{hilf-crossing} as well as \eqref{hilf-crossing.2} or \eqref{hilf-crossing.3}. We are going to bound 
$\# \on{M}_n(\pi)$. First we choose $P_{11}, P_{12}, \ldots, P_{1, m_1 -1},$ having $s(n) t(n)$ choices for $P_{11}$ and at most $s(n) \vee t(n)$ or $\beta_3(n) \le \beta_2(n)$ choices for the other pairs according to whether or not a new block of $\pi$ is reached. Then we make our choice for $P_{1, l}$ (there are at most $\beta_2(n)$ ways to do so since $(1,1) \sim_{\pi} (1,l)$) 
and proceed to lower second indices, $P_{1, l}, P_{1, l-1}, \ldots, P_{1, m_1 + 1}.$ Now, $P_{1, m_1}$ is fixed by (B2). On the other hand, by \eqref{hilf-crossing}, 
the block of $(1, m_1)$ is reached for the first time, so a factor of $s(n) \vee t(n)$ is ``lost'' , and
$$\#\on{M}_n(\pi) \le s(n) t(n) (s(n) \vee t(n))^{\# \pi - 2} \beta_2(n)^{2k - \# \pi + 2} = \calo(n^{\# \pi + \epsilon}).$$
If \eqref{hilf-crossing.2} holds, then $\# \pi = k-1,$ and $\frac{1}{n^k} \#\on{M}_n(\pi) = o(1).$ If \eqref{hilf-crossing.3} holds, then we may argue as in the last paragraph that the block of the simple connector $(1, m_3)$ fails to contribute a factor of $s(n) \vee t(n)$, hence   $\frac{1}{n^k} \#\on{M}_n(\pi) = o(1).$ 
\end{proof}

\begin{lemma} 
\label{nextneighborelim}
Suppose that $\beta_2(n) = \calo(n^{\epsilon})\ \forall \epsilon > 0$ and $\beta_0(n)  \beta_2(n)^{\eta} = o(n^2)\ \forall \eta > 0$. Consider 
$\pi \in \calp\calp^c_{[2k_1] \cup [2k_2]}$ and assume that it contains a block of the form $\{ (i, l), (i, l+1)\}$ with $i \in \{1, 2\}, l \in [2k_i]$. Remove this
block from $\pi$, thus obtaining a partition $\pi\tra$. Then we have that
$$ \frac{1}{n^k}\ \# \on{M}_n(\pi) \le C  \frac{1}{n^{k-1}} \# \on{M}_n(\pi\tra) + o(1),$$ where $k := k_1 + k_2$ as usual.
\end{lemma}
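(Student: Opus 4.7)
The plan is to split the count $\#\on{M}_n(\pi)$ into two cases according to whether the pre-specified pair $(P_{i,l},P_{i,l+1})$ consists of two equal or two distinct elements of $[s]\times[t]$. After the relabelings $i\to 1$ and (by the $p$-$q$ symmetry of the problem, which exchanges the roles of $s$ and $t$) $l$ even, the (B2) constraints automatically force $p_{1,l}=p_{1,l+1}$; writing $p$ for this common value, the block condition becomes $(p,q_{1,l})\sim_n (p,q_{1,l+1})$. Let $\#A$ count those $P\in\on{M}_n(\pi)$ with $q_{1,l}=q_{1,l+1}$, so $P_{1,l}=P_{1,l+1}$, and $\#B$ those with $q_{1,l}\neq q_{1,l+1}$, so that $\#\on{M}_n(\pi)=\#A+\#B$.

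For Case A the key observation is that $P_{1,l}=P_{1,l+1}$ together with the (B2)-identifications $q_{l-1}=q_l$ and $q_{l+1}=q_{l+2}$ forces $q_{1,l-1}=q_{1,l+2}$. Consequently, the restriction of $P$ to $\calm':=\calm\setminus\{(1,l),(1,l+1)\}$ satisfies (B2) for the \emph{contracted} circle structure in which circle $1$ has been shortened to length $2(k_1-1)$, and its induced partition on $\calm'$ is exactly $\pi'$. Conversely, every $P'\in \on{M}_n(\pi')$ admits at most $s(n)=O(n)$ Case A extensions to $\on{M}_n(\pi)$, parametrised by the common $p$-coordinate of the collapsed block. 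This yields $\#A\le Cn\cdot \#\on{M}_n(\pi')$, producing the main term of the claimed inequality after division by $n^k$.

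For Case B I would use the $\beta_0$-hypothesis directly. The triple $(p,q_{1,l},q_{1,l+1})$ lies in the set from the second clause of the definition of $\beta_0(n)$, so it has at most $\beta_0(n)$ possible values. For each such pre-fix, (B2) additionally forces $q_{1,l-1}=q_{1,l}$ and $q_{1,l+2}=q_{1,l+1}$. I would bound the number of completions to an element of $\on{M}_n(\pi)$ by adapting the walking argument of the proof of Lemma \ref{jge3}, with two modifications: remove the pre-fixed block from the walk's bookkeeping (replacing its usual $n\cdot\beta_2$ contribution by the overall $\beta_0$ factor), and use that at the boundary positions $(1,l-1)$ and $(1,l+2)$ both coordinates are forced on arrival, so they lose the $n$-factor that the walk would have paid on reaching their equivalence classes. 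This yields a completions count of order $C\, n^{\#\pi-2+\epsilon}$ for every $\epsilon>0$, so that $\#B \le C\beta_0(n)\beta_2(n)^{O(1)} n^{\#\pi-2}$. Since $\#\pi\le k$ for $\pi\in\calp\calp^c_{[2k_1]\cup[2k_2]}$, dividing by $n^k$ gives
\[
\frac{1}{n^k}\#B \;\le\; C\,\frac{\beta_0(n)\,\beta_2(n)^{O(1)}}{n^2},
\]
which is $o(1)$ by the hypothesis $\beta_0(n)\beta_2(n)^\eta=o(n^2)$ for all $\eta>0$, upon choosing $\eta$ larger than the implicit $O(1)$ exponent of $\beta_2$.

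The main obstacle I anticipate is the walking bound for Case B: the generic estimate from Lemma \ref{jge3} is too weak by two full factors of $n$, so one must extract genuine $n$-savings at \emph{both} pre-fixed boundary positions $(1,l-1)$ and $(1,l+2)$. This in turn requires choosing the walk's starting position and orientation (for instance starting on circle $2$ and crossing into circle $1$ via an appropriate connector) so that these two boundary positions are first encountered in blocks of $\pi$ not already visited, together with a short case analysis according to the local structure of $\pi$ around the pre-specified next-neighbour pair.
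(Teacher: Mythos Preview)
Your Case A is correct and matches the paper's first case exactly (modulo an inessential parity swap: the paper relabels the nearest-neighbour block to $\{(1,2k_1-1),(1,2k_1)\}$, so that the shared coordinate is $q$ and the free one is $p$, giving $t(n)$ rather than $s(n)$ extensions; either way the main term $Cn\cdot\#\on{M}_n(\pi')$ follows).

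Your Case B, however, has a genuine gap. The claim that \emph{both} boundary positions $(1,l-1)$ and $(1,l+2)$ yield an $n$-saving is not justified: a forced position saves a factor of order $n$ only when it is the \emph{first} visit to its $\pi$-block. If you start the walk at one boundary (one free coordinate) and traverse circle~1 to the other, only the far boundary is forced, and that forced position is a first-visit only if its $\pi$-partner lies on circle~2 --- which need not be the case. Starting the walk on circle~2 instead (as you suggest) costs an extra $s(n)t(n)$ factor that you cannot afford. Without the extra saving, the naive walk-from-prefix bound is only $\beta_0(n)(s\vee t)^{\#\pi-1}\beta_2(n)^{O(1)}$, and for $\#\pi=k$ the hypothesis $\beta_0\beta_2^\eta=o(n^2)$ merely gives $o(n^{k+1})$, not $o(n^k)$. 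So the argument as written does not close.

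The paper resolves this by a structural dichotomy rather than by walk-ordering at the boundaries. Every $\pi\in\calp\calp^c_{[2k_1]\cup[2k_2]}$ is either of type (PP1), a pair partition with $\#\pi=k$, or of type (PP2), with one $4$-block and $\#\pi=k-1$. In the (PP2) case no extra saving is needed: the naive walk-from-prefix already yields $\beta_0(s\vee t)^{k-2}\beta_2^{k+1}=o(n^k)$. In the (PP1) case one knows there is a \emph{simple connector} $(1,l^*)$ outside the prefixed block; one then walks inward from \emph{both} boundaries (each has one coordinate determined by the prefix) so as to sandwich $(1,l^*)$, which is thereby forced. Because $(1,l^*)$ is a simple connector its $\pi$-partner lies on circle~2, hence is unvisited --- guaranteeing the first-visit saving regardless of the local structure near the prefixed block. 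This is precisely the mechanism already established in Lemmas~\ref{jge3} and~\ref{pp-crossing-negl}, and it is the ``short case analysis'' you anticipated but did not carry out.
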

\begin{proof}
Assume without loss that the block in question is $\{ (1, 2k_1 - 1), (1, 2k_1)\}$. First, consider those elements of $\on{M}_n(\pi)$ which
satisfy $p_{1, 2k_1 - 2} = p_{11}.$ By condition (B2), any such element can be constructed from an element of $\on{M}_n(\pi\tra)$ and a choice for $q_{1, 2k_1 - 1} = 
q_{1, 2k_1}$. Note that there are $t(n)$ choices since $\{ (1, 2k_1 - 1), (1, 2k_1)\}$ is a block of $\pi$. \\
Coming to those elements of $\on{M}_n(\pi)$ with $p_{1, 2k_1 - 2} \neq p_{11}$, (B2) implies that $p_{1, 2k_1 - 1} \neq p_{1, 2k_1}$ and $q_{1, 2k_1 - 1} = q_{1, 2k_1}$. To
bound the number of these elements, observe that there are in total at most $\beta_0(n)$ choices for the pairs $(p_{1, 2k_1 - 1}, q_{1, 2k_1 - 1})$ and 
$(p_{1, 2k_1}, q_{1, 2k_1})$. Assume now that $\pi$ has a simple connector $(1, l)$, where we necessarily have that $l \not\in \{2k_1 - 1, 2k_1\}$. Arguing as in the proof of 
Lemma \ref{pp-crossing-negl}, one sees that the block of $(1, l)$ affords at most $\beta_2(n)$ choices. So the number of elements of $\on{M}_n(\pi)$ with 
$p_{1, 2k_1 - 2} \neq p_{11}$ and a simple connector can be upper bounded by $(s(n) \vee t(n))^{\# \pi - 2} \beta_0(n) \beta_2(n)^{2k - \# \pi + 1}$. In view of 
(PP1), (PP2), in the present case we have $\# \pi = k$, so that we obtain a bound of $(s(n) \vee t(n))^{k-2} o(n^2) = o(n^k)$. \\
If $\pi$ has no simple connector, then by (PP1), (PP2) it has a $4$-block, and we get a contribution that is upper bounded by $s(n) \vee t(n))^{(k-1)-1} \beta_0(n)
\beta_2(n)^{2k - (k-1)} = o(n^k).$ In total,
$$ \frac{1}{n^k} \# \on{M}_n(\pi) \le \frac{s(n) \vee t(n)}{n}\ \frac{1}{n^{k-1}} \#\on{M}_n(\pi\tra) + o(1).$$
\end{proof}

We know from Lemma \ref{pp-crossing-negl} that crossing partitions are asymptotically negligible. We will apply Lemma \ref{nextneighborelim}
to see that this is also the case for certain non-crossing partitions.\\

{\it From now on we will always assume that $\beta_2(n) = \calo(n^{\epsilon})\ \forall \epsilon > 0$ and $\beta_0(n)  \beta_2(n)^{\eta} = o(n^2)\ \forall \eta > 0$.}

\begin{cor}
Let $\pi \in \calp\calp^c_{[2k_1] \cup [2k_2]}$ be a non-crossing partition with exactly two connectors on each circle. Then the connectors form a $4$-block, or $\pi$ is
asymptotically negligible. 
\end{cor}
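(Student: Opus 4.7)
The plan is to show that $\pi$ must be a very rigid connected non-crossing pair partition, then strip off its intra-circle blocks one at a time via Lemma \ref{nextneighborelim} until reaching a base case which is vacuously empty.

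First, I would pin down the shape of $\pi$. Partitions of type (PP2) carry all four of their connectors in a single $4$-block, so the hypothesis that the connectors do not form a $4$-block forces $\pi$ to be of type (PP1), i.e., a connected pair partition. Its four connectors must then constitute two cross-circle pair blocks, and the remaining $k-2$ blocks are intra-circle pairs. Next, I would use non-crossing to locate these intra-circle pairs geometrically: if $a,b\in[2k_1]$ are the two connector positions on circle $1$, and an intra-circle pair $\{(1,l_1),(1,l_2)\}$ had one endpoint in each arc $]a,b[_1$, $]b,a[_1$, then in cyclic order $b\in ]l_1,l_2[_1$ and $a\in ]l_2,l_1[_1$, so the two connectors would play the r\^ole of $m_1,m_2$ in (Cross~2). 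Hence on each circle all intra-circle pairs are confined to a single arc between the connectors, and within that arc they form a linear non-crossing pair partition.

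The heart of the argument is an iterative application of Lemma \ref{nextneighborelim}. A non-empty linear non-crossing pair partition always contains an adjacent pair, so as long as some arc is non-empty, $\pi$ has a block of the form $\{(i,l),(i,l+1)\}$ that can be stripped off, producing a partition $\pi\tra$ on a reduced index set. A short check confirms that $\pi\tra$ is again a connected non-crossing pair partition whose connectors coincide with those of $\pi$ and still form two cross-circle pair blocks, so Lemma \ref{nextneighborelim} applies again. After $k-2$ such reductions I arrive at a partition $\pi^\ast$ on $[2]\cup[2]$ that consists of nothing but the two cross-circle pairs.

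The base case is vacuous: with $k_1=k_2=1$, condition (B2) together with the cyclic identification forces $P_{1,1}=P_{1,2}$ and $P_{2,1}=P_{2,2}$, so $(1,1)\sim_{\pi^\ast}(1,2)$, contradicting the cross-circle pair structure of $\pi^\ast$. Hence $\on{M}_n(\pi^\ast)=\emptyset$, and telescoping the inequality from Lemma \ref{nextneighborelim} through the finitely many reductions gives $\tfrac{1}{n^k}\#\on{M}_n(\pi)=o(1)$. The main obstacle I anticipate is the bookkeeping in the iterative step, in particular verifying that removing an adjacent intra-circle block preserves connectedness, non-crossing, the pair-partition property, and the exact count of two connectors on each circle, so that Lemma \ref{nextneighborelim} may legitimately be invoked at each stage.
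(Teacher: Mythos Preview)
Your proposal is correct and follows essentially the same route as the paper: assume no $4$-block, iteratively strip nearest-neighbor intra-circle pairs via Lemma~\ref{nextneighborelim}, reduce to $k_1'=k_2'=1$, and observe that (B2) forces $P_{1,1}=P_{1,2}$, making $\on{M}_n$ of the reduced partition empty. One small overstatement: your argument only shows that each intra-circle pair has both endpoints in the \emph{same} arc between the two connectors, not that all such pairs lie in a single arc; but this is harmless, since either arc, if non-empty, still carries a linear non-crossing pair partition and hence an adjacent pair.
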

\begin{proof}
Assume that $\pi$ has no $4$-block. By iteratively applying Lemma \ref{nextneighborelim} and suitably relabeling we obtain a partition $\pi\tra \in \calp\calp^c_{[2k_1\tra] \cup [2k_2\tra]}$
without nearest neighbor pairs, and we have a bound
$$\frac{1}{n^k} \# \on{M}_n(\pi) \le C \frac{1}{n^{k\tra}} \# \on{M}_n(\pi\tra) + o(1),$$
where $k = k_1 + k_2$ and $k\tra = k_1\tra + k_2\tra$. It is clear that $\pi\tra$ must be non-crossing, and since it has no nearest neighbor pairs, all points must be 
connectors. So $k_1\tra = k_2\tra = 1$. The $1$-circle is of the form $(p_{11}, q_{11}),\ (p_{12}, q_{12})$, and by (B2) one must have $p_{11} = p_{12},\ q_{11} = q_{12}$. 
This contradicts the assumption that $\pi$, hence $\pi\tra$, has no $4$-block, which implies that $(p_{11}, q_{11}) \not\sim_{\pi\tra} (p_{12}, q_{12}).$ 
So $\# \on{M}_n(\pi\tra) = 0.$
\end{proof}
On the other hand it is easy to see that if the connectors of $\pi$ form a $4$-block, then $\pi$ will give a nonzero contribution to the limit. In view of these
results we define $\caln\calp\calp^2_{[2k_1] \cup [2k_2]}$ as the set of all non-crossing partitions in $\calp\calp^c_{[2k_1] \cup [2k_2]}$ with a $4$-block, and 
for $m \ge 2$ we define $\caln\calp\calp^{2m}_{[2k_1] \cup [2k_2]}$ as the set of all non-crossing partitions in $\calp\calp^c_{[2k_1] \cup [2k_2]}$ with exactly 
$2m$ connectors on each circle.\\

To identify one further class of non-crossing partitions with asymptotically vanishing contribution, we follow \cite{Schenker_Schulz-Baldes2} and introduce
the notion of a dihedral partition, which means that neighboring connectors in the first circle are connected to neighboring connectors on the second circle. 
Here connectors $(i, l), (i, l\tra),\ l \neq l\tra,$ are
neighboring if one of the intervals $\{ i\} \times ]l, l\tra[_i$ or $\{ i\} \times ]l\tra, l[_i$ contains no connectors. Denote by $\cald\caln\calp\calp^{2m}_{[2k_1] \cup [2k_2]}\subseteq \caln\calp\calp^{2m}_{[2k_1] \cup [2k_2]}$
the set of all dihedral partitions of $[2k_1] \cup [2k_2]$ with precisely $2m$ connectors on each circle. Note that the number of connectors on a circle
differs from the length of a circle by an {\it even} number of points. For $k_1 = k_2 = m \ge 2$, it will be useful later to characterize 
 $\cald\caln\calp\calp^{2m}_{[2m] \cup [2m]}$ in terms of the dihedral group $\on{D}_{4m}$. Recall that $\on{D}_{4m}$ is the subgroup of the symmetric group $\on{Sym}(2m)$
that is generated by the $2m$-cycle $(1~2~3 \ldots 2m)$ and the transposition $(1~2m)(2~2m-1) \ldots(m~m+1)$. Then the elements of  $\cald\caln\calp\calp^{2m}_{[2m] \cup [2m]}$
are precisely the partitions of $[2m] \cup [2m]$ of the form $\{ \{ (1, r), (2, g(r))\}, r = 1, \ldots, 2m\}$ for some $g \in \on{D}_{4m}$.

\begin{lemma}
\label{dihedral}
If $\pi \in \caln\calp\calp^{2m}_{[2k_1] \cup [2k_2]}$ is not dihedral, then it is asymptotically negligible.
\end{lemma}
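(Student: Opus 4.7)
The plan is to follow the reduction strategy of \cite{Schenker_Schulz-Baldes2} for Wigner matrices, adapted to the bipartite structure of $\W_n = \tfrac{1}{n}\Y_n\Y_n^\ast$: peel $\pi$ down to a purely connector partition $\pi^\ast$ on $[2m]\cup[2m]$ and then show that when the encoded bijection $g$ is non-dihedral, the count of admissible tuples $P$ cannot reach the order $n^{2m}$. The first step is to verify that every non-crossing $\pi \in \caln\calp\calp^{2m}_{[2k_1]\cup[2k_2]}$ with at least one non-connector pair contains a nearest-neighbor block $\{(i,l),(i,l+1)\}$: pick any non-connector pair $\{(i,l_1),(i,l_2)\}$; by (Cross~2), all connectors lie in only one of its two arcs, so the opposite arc consists purely of non-connectors -- if empty, the pair is already nearest-neighbor, and otherwise its non-connectors form a non-crossing pair partition on a linear segment, any innermost block of which is nearest-neighbor. (Note that for $m \geq 2$, $\pi$ must be of type (PP1), since a (PP2) $4$-block of connectors would force $2m=2$.) Iterating Lemma \ref{nextneighborelim} strips the non-connectors off one pair at a time, leaving a partition $\pi^\ast = \{\{(1,r),(2,g(r))\}: r \in [2m]\}$ for some $g \in \on{Sym}(2m)$ and the estimate
\begin{equation*}
\tfrac{1}{n^{k}} \# \on{M}_n(\pi) \le C \, \tfrac{1}{n^{2m}} \# \on{M}_n(\pi^\ast) + o(1).
\end{equation*}
The cyclic order of the surviving connectors is untouched by the reduction, so $\pi^\ast$ is non-dihedral when $\pi$ is, i.e.\ $g \notin \on{D}_{4m}$.

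Next I would count $\#\on{M}_n(\pi^\ast)$ directly. Condition (B2) groups the $2m$ indices of each circle into $m$ $p$-classes $\{2j-2,2j-1\}$ (cyclically) and $m$ $q$-classes $\{2j-1,2j\}$, and inspecting the generators of $\on{D}_{4m}$ recalled just before the lemma shows that $g \in \on{D}_{4m}$ exactly when $g$ sends each $p$-class to either a $p$-class or a $q$-class on the target circle (and similarly for $q$-classes); in other words, the dihedral permutations are exactly those respecting this bipartite structure. Generating $P_{1,\cdot}$ in accordance with (B2) produces $m$ free $p$-values and $m$ free $q$-values, for $\calo(n^{2m})$ choices. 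For circle $2$, each $P_{2,s}$ must lie in the $\sim_n$-class of $P_{1,g^{-1}(s)}$ (at most $\beta_2(n) = \calo(n^\vep)$ representatives) and satisfy the circle-$2$ (B2) shared-coordinate equalities. When $g \in \on{D}_{4m}$ those equalities are automatically inherited from the analogous ones on circle $1$; but when $g \notin \on{D}_{4m}$, the failure of bipartite preservation exhibits a $p$- or $q$-class on circle $2$ whose two elements are preimages, under $g^{-1}$, of circle-$1$ indices sharing neither a $p$- nor a $q$-coordinate. The circle-$2$ (B2) equality then forces a genuinely new coincidence among the circle-$1$ coordinates; this replaces a free $s$- or $t$-many choice by a $\beta_3(n) \le \beta_2(n) = \calo(n^\vep)$ choice, saving a full factor of $n$ and yielding $\#\on{M}_n(\pi^\ast) = \calo(n^{2m-1+\vep})$ for every $\vep > 0$. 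Combined with the peeling estimate this gives $\tfrac{1}{n^k}\#\on{M}_n(\pi) = o(1)$.

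The hard part will be the cost accounting in the final step: certifying that the coincidence forced by non-dihedrality of $g$ really costs a full factor of $n$ and cannot be absorbed by the $\calo(n^\vep)$ flexibility of $\sim_n$-classes. A careful combinatorial comparison of an arbitrary $g \in \on{Sym}(2m)$ with the elements of $\on{D}_{4m}$, combined with the standing assumptions $\beta_2(n) = \calo(n^\vep)$ and $\beta_3(n) \le \beta_2(n)$, should supply the required saving and complete the lemma.
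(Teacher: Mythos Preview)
Your reduction step (peeling nearest-neighbor pairs via Lemma \ref{nextneighborelim} to reach a pure-connector partition $\pi^\ast$ on $[2m]\cup[2m]$, which remains non-dihedral) matches the paper exactly. The divergence is in how you count $\#\on{M}_n(\pi^\ast)$.

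Your plan is to fix all of circle~1 first ($\calo(n^{2m})$ choices) and then argue that the circle-2 (B2) constraints, pulled back through $g$, over-constrain the configuration by one degree of freedom when $g\notin\on{D}_{4m}$. The imprecision is in the sentence ``the circle-$2$ (B2) equality then forces a genuinely new coincidence among the circle-$1$ coordinates; this replaces a free $s$- or $t$-many choice by a $\beta_3(n)$ choice.'' That is only literally true when $\sim_n$ is trivial. In general, the constraint $p_{2,s}=p_{2,s+1}$ together with $P_{2,s}\sim_n P_{1,g^{-1}(s)}$ and $P_{2,s+1}\sim_n P_{1,g^{-1}(s+1)}$ does \emph{not} force any equality among the $p_{1,\cdot}$; it only says the $\sim_n$-classes of $P_{1,g^{-1}(s)}$ and $P_{1,g^{-1}(s+1)}$ share a first coordinate. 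One can still squeeze a factor-$n$ saving out of this (bound the number of pairs of index-pairs whose $\sim_n$-classes share a $p$-value by $st\cdot\beta_2\cdot t\beta_2$), but that is an additional argument you have not supplied, and it is exactly the ``hard part'' you flag at the end.

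The paper avoids this back-reaction analysis altogether by a different order of counting. Having normalized so that $(1,1)\sim(2,1)$ and $(1,2)\sim(2,l)$ with $l\notin\{2,2m\}$ (your adjacency-breaking pair, relabeled), it chooses $P_{1,3},\ldots,P_{1,2m}$ first (cost $st(s\vee t)^{2m-3}$), then all of circle~2 except $(2,1)$ and $(2,l)$ (cost $\beta_2^{2m-2}$, since every block met is old). Because $l\notin\{2,2m\}$, each of $(2,1)$ and $(2,l)$ has both (B2)-neighbors already fixed, so $P_{2,1}$ and $P_{2,l}$ are determined for free. Only then are $P_{1,1}$ and $P_{1,2}$ chosen, each at cost $\beta_2$ via the connector relation. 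This gives $\#\on{M}_n(\pi^\ast)\le (s\vee t)^{2m-1}\beta_2^{2m}=\calo(n^{2m-1+\epsilon})$ directly, with no need to analyze how circle-2 constraints feed back onto circle-1 choices. The saving is made manifest by the counting order rather than extracted after the fact.
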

\begin{proof}
We eliminate all nearest neighbor pairs from $\pi$ and obtain $\pi\tra \in \caln \calp \calp^{2m}_{[2m] \cup [2m]}$, which consists only of connectors. Observe that
$\pi\tra$ is not dihedral. In view of Lemma \ref{nextneighborelim}, it suffices to prove that $\pi\tra$ is asymptotically negligible. Observe furthermore that $m \ge 2$,
because otherwise, all points on a circle are neighbors, and all partitions are thus dihedral. In particular, it follows from (PP2) that $\pi\tra$ contains no $4$-block. 
Assume without loss that $(1, 1) \sim_{\pi\tra} (2, 1)$ and $(1, 2) \sim_{\pi\tra} (2, l)$ with $l \not\in \{2, 2m\}$. We start the usual bounding exercise at $(1, 3)$,
for which we have $s(n) t(n)$ choices. Since all points on the $1$-circle are simple connectors (as no $4$-block exists), $(1, 4), \ldots, (1, 2m)$ afford 
$(s(n) \vee t(n))^{2m-3}$ choices. Choosing index pairs for all points on the $2$-circle except $(2, 1)$ and $(2, l)$, we get a bound of $\beta_2(n)^{2m-2}$,
since no new block is reached in the process. By (B2), then, $P_{21}$ and $P_{2l}$ are already determined, and for each of $P_{11}, P_{12}$ there are at most $\beta_2(n)$ choices.
Summing up, we obtain that
$$ \# \on{M}_n(\pi\tra) \le (s(n) \vee t(n))^{2m-1} \beta_2(n)^{2m} = \calo(n^{2m-1+\epsilon})\quad \forall \epsilon > 0.$$
Since $k-2m$ pairs have been removed, Lemma \ref{nextneighborelim} implies what was claimed.
\end{proof}
Summing up, we have reduced the calculation of covariances to the expression
\begin{align} \on{C}_2(\tr(\W^{k_1}),& \tr(\W^{k_2}))\nonumber \\ \label{CovarDNPP1}&= \frac{1}{n^k} \sum_{m = 1}^{k_1 \wedge k_2}
\sum_{\pi \in \cald\caln\calp\calp^{2m}_{[2k_1] \cup [2k_2]}} \sum_{P \in \on{M}_n(\pi)}  \on{C}_2\left( a(P_1), a(P_2) \right) + o(1), 
\end{align}
where for $i = 1, 2$ we have written
\begin{equation}
\label{kurzprodukt}
a(P_i) := \prod_{l=1}^{k_i} a(P_{i, 2l-1}) \overline{a(P_{i, 2l})}.
\end{equation}

\section{The covariances: reduction of multi-indices}
\label{red-multiind}
We now revisit the process, described in the previous section, by which we assigned to $\pi\in\DP$ a partition 
$\hat{\pi}\in\mathcal{DNPP}_{[2m]\cup [2m]}^{2m}$ by 
successively eliminating all $2$-blocks of $\pi$ on the same circle. One step of the process consists in removing 
a block $(i,l)\sim_\pi (i,l+1)$ and then relabeling the remaining pairs
\begin{equation}
(j,r) \mapsto \begin{cases} (j,r), & j\neq i, \\ (i,r), & j=i \ \text{and} \ r<l, \\ (i,r-2), & j=i \ \text{and} \ r\geq l+2, \end{cases}
\label{relabel3}
\end{equation}

if $l\neq 2k_i$ and
\begin{equation}
(j,r) \mapsto \begin{cases} (j,r), & j\neq i, \\ (i,2k_i-2), & j=i \ \text{and} \ r=2, \\ (i,r-2), & j=i \ \text{and} \ r\neq 2, \end{cases}
\label{relabel4}
\end{equation}

if $l=2k_i$. The partition $\hat{\pi}$ is then obtained by repeating this step until only connectors are left. We are now going to study the multi-indices that are consistent with $\pi$ and $\hat{\pi}$. To this end, we introduce
a subset $\on{PM}_n(\pi) \subset \on{M}_n(\pi)$ such that any $P = (P_{i,l}) \in \on{PM}_n(\pi) $ has the additional
property \\
\begin{itemize}
\item[(P)] $(i, l) \sim_{\pi} (i, l\tra)\ \Rightarrow\ P_{i,l} = P_{i,l\tra}.$
\end{itemize}

\grab

Note that for $\hat{\pi} \in\mathcal{DNPP}_{[2m]\cup [2m]}^{2m}$ as above one has that
$\on{PM}_n(\hat{\pi}) = \on{M}_n(\hat{\pi})$. Now, for $P\in \on{PM}_n(\pi)$ 
we define $\hat{P}\in \on{PM}_n(\hat{\pi})=\on{M}_n(\hat{\pi})$ to be the multi-index that remains after eliminating in each step an equivalent nearest neighbor pair and then relabeling by the same procedure as in \eqref{relabel3} and \eqref{relabel4}. To see that $\hat{P}$ is indeed consistent with $\hat{\pi}$, consider the first nearest neighbor pair $(i,l)\sim_\pi (i,l+1)$ which has to be removed. By (P), we have that $P_{i,l} = P_{i,l+1}$. Since $P$ satisfies (B2), we obtain that either $q_{l-1}=q_{l+2}$ or $p_{l-1}=p_{l+2}$ depending on whether $l$ is even or odd. This implies that condition (B2) is still satisfied after $\{(i,l),(i,l+1)\}$ has been eliminated. Furthermore, the elements of the multi-index $P\tra$ which remains after this first step are relabeled in the same way as those of the partition $\pi\tra$, that is $P\tra$ is consistent with $\pi\tra$. The same argument can be applied for any following nearest neighbor pair. Consequently, $\hat{P}$ is $\hat{\pi}$-consistent.\\

The following lemma ensures that the complement of $\on{PM}_n(\pi)$ does not contribute to the limit.

\begin{lemma}
Let $\pi\in\DP$. Then
\begin{equation*}
\frac{1}{n^k} \#\left(\on{M}_n(\pi)\backslash \on{PM}_n(\pi)\right) = o(1).
\end{equation*}
\label{pmn}
\end{lemma}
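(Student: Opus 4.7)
The plan is to proceed by induction on $k_1 + k_2 - 2m$, the total number of on-circle pair blocks of $\pi$. In the base case $k_1 + k_2 = 2m$, every block of $\pi$ is a cross-circle connector (when $m \geq 2$) or the single $4$-block with $k_1 = k_2 = 1$ (when $m = 1$); in the former case property (P) imposes no constraint, and in the latter case a direct inspection using (B2) together with the cyclic identification of $2k_i + 1$ with $1$ shows that (P) is automatically satisfied, so $\on{M}_n(\pi) = \on{PM}_n(\pi)$ and there is nothing to prove.

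For the inductive step I will exploit the non-crossing structure of $\pi$, which forces the existence of an innermost on-circle pair block, necessarily a nearest-neighbor pair on its circle. After relabeling (possibly swapping the roles of the two circles) I may take this block to be $B = \{(1, 2k_1 - 1), (1, 2k_1)\}$. Let $\pi'$ denote the partition in $\cald\caln\calp\calp^{2m}_{[2(k_1 - 1)] \cup [2k_2]}$ obtained from $\pi$ by deleting $B$ and relabeling as in \eqref{relabel4}. I decompose
$$ \on{M}_n(\pi) \setminus \on{PM}_n(\pi) \;=\; \calr_1 \cup \calr_2, $$
where $\calr_1$ is the subset of $P$ for which (P) fails at $B$ itself, equivalently $P_{1, 2k_1 - 1} \neq P_{1, 2k_1}$ or, by (B2), $p_{1, 2k_1 - 2} \neq p_{1, 1}$; and $\calr_2$ is the subset of $P$ for which (P) holds at $B$ but fails at some other block of $\pi$.

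The size of $\calr_1$ is exactly controlled by the argument already carried out in the proof of Lemma \ref{nextneighborelim}: the case $p_{1, 2k_1 - 2} \neq p_{1, 1}$ treated there yields a bound of $(s(n) \vee t(n))^{\#\pi - 2} \beta_0(n) \beta_2(n)^{2k - \#\pi + 1}$ (or its analogue when $\pi$ contains a $4$-block), which is $o(n^k)$ under the standing assumption $\beta_0(n) \beta_2(n)^\eta = o(n^2)$. For $\calr_2$, the deletion-and-relabeling map sends each $P$ to an element $\hat P \in \on{M}_n(\pi') \setminus \on{PM}_n(\pi')$ with fibers of cardinality at most $t(n)$: once $\hat P$ is fixed, condition (P) at $B$ together with (B2) and cyclicity forces the $p$-coordinate of the common value $P_{1, 2k_1 - 1} = P_{1, 2k_1}$, leaving only $q_{1, 2k_1 - 1} \in [t(n)]$ free. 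The induction hypothesis then gives $\#\calr_2 \leq t(n) \cdot o(n^{k-1}) = o(n^k)$, which together with the bound on $\calr_1$ completes the step.

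The main obstacle will be justifying the fiber bound for the deletion-relabeling map and confirming that its image actually lies in $\on{M}_n(\pi')$: one needs to verify that the cyclic consistency condition $p_{1, 2k_1 - 2} = p_{1, 1}$ forced by (P) at $B$ is precisely what is needed for $\hat P$ to satisfy the reduced (B2) after the identification of $2(k_1 - 1) + 1$ with $1$ on the shortened first circle, and that a violation of (P) at a block $B' \neq B$ of $\pi$ corresponds bijectively to a violation at the image block in $\pi'$. Once this bookkeeping is set up, the rest is a direct appeal to Lemma \ref{nextneighborelim} and the induction hypothesis.
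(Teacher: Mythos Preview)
Your proposal is correct and follows essentially the same route as the paper: both argue by induction on the number of on-circle pairs, pick an innermost nearest-neighbor pair (normalized to $\{(1,2k_1-1),(1,2k_1)\}$), split according to whether $P_{1,2k_1-1}=P_{1,2k_1}$, handle the inequality case via the $\beta_0$-argument from Lemma~\ref{nextneighborelim}, and reduce the equality case to $\on{M}_n(\pi')\setminus\on{PM}_n(\pi')$ with a fiber of size $t(n)$. (A trivial remark: deleting $\{(1,2k_1-1),(1,2k_1)\}$ uses the relabeling~\eqref{relabel3} with $l=2k_1-1$, not~\eqref{relabel4}; this has no effect on the argument.)
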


\begin{proof}
If $k_1=k_2=m$, then $\on{M}_n(\pi)\backslash \on{PM}_n(\pi) = \emptyset$, and there is nothing to prove. Thus, let $k_1>m$ or $k_2>m$. Since $\pi\in\DP$ is non-crossing, it contains at least one nearest neighbor pair. Without loss of generality, we assume that $\{(1,2k_1-1),(1,2k_1)\}$ is such a $2$-block. Let $\pi'$ denote the partition obtained by eliminating this block. First of all, we want to verify that
\begin{equation}
\frac{1}{n^k} \#\left(\on{M}_n(\pi)\backslash \on{PM}_n(\pi)\right) \leq C \frac{1}{n^{k-1}} \#\left(\on{M}_n(\pi')\backslash \on{PM}_n(\pi')\right) + o(1).
\label{pmnest}
\end{equation}

To this end, we simply need to mimic the proof of Lemma~\ref{nextneighborelim}. Thus, take some $P\in \on{M}_n(\pi)\backslash \on{PM}_n(\pi)$. If $P_{1,2k_1-1}=P_{1,2k_1}$, then the reduced multi-index $P\tra$ is consistent with $\pi\tra$, and (P) is still not satisfied by $P'$. In other words, we have $P'\in\on{M}_n(\pi')\backslash \on{PM}_n(\pi')$. This allows us to conclude that any element $P$ with $P_{1,2k_1-1}=P_{1,2k_1}$ can be constructed from an element of $\on{M}_n(\pi')\backslash \on{PM}_n(\pi')$ and a choice for $q_{1,2k_1-1}=q_{1,2k_1}$. The latter admits $t(n)$ possibilities. However, if $P_{1,2k_1-1}\neq P_{1,2k_1}$, we can conclude that $p_{1,2k_1-1}\neq p_{1,2k_1}$ since $q_{1,2k_1-1}=q_{1,2k_1}$ by (B2). In particular, $p_{1,2k_1-2}\neq p_{1,1}$. 
This situation has already been analyzed in the proof of Lemma~\ref{nextneighborelim}, and led to the upper bound $o(n^k)$ for the number of elements in $\on{M}_n(\pi)$ such that $p_{1,2k_1-2}\neq p_{1,1}$. To sum up, \eqref{pmnest} holds.\\
Applying this estimate successively, we arrive at
\begin{equation*}
\frac{1}{n^k} \#\left(\on{M}_n(\pi)\backslash \on{PM}_n(\pi)\right) \leq C \frac{1}{n^{2m}} \#\left(\on{M}_n(\hat{\pi})\backslash \on{PM}_n(\hat{\pi})\right) + o(1),
\end{equation*}

where $\hat{\pi}\in\mathcal{DNPP}_{[2m]\cup [2m]}^{2m}$ is the reduced partition described at the beginning of this section. However, $\on{PM}_n(\hat{\pi})=\on{M}_n(\hat{\pi})$, implying that $\#\left(\on{M}_n(\hat{\pi})\backslash \on{PM}_n(\hat{\pi})\right)=0$. This completes the proof.
\end{proof}

Hence we may replace $\on{M}_n(\pi)$ by $\on{PM}_n(\pi)$ in \eqref{CovarDNPP1} to obtain
\begin{align} \on{C}_2(\tr(\W^{k_1}),& \tr(\W^{k_2}))\nonumber \\ \label{CovarDNPP2}&= \frac{1}{n^k} \sum_{m = 1}^{k_1 \wedge k_2}
\sum_{\pi \in \cald\caln\calp\calp^{2m}_{[2k_1] \cup [2k_2]}} \sum_{P \in \on{PM}_n(\pi)}  \on{C}_2\left( a(P_1), a(P_2) \right) + o(1), 
\end{align}

Now, note that $(i,l)\sim_\pi (i,l')$ in particular implies that either $l$ is odd and $l'$ is even or vice versa. This is due to the fact that we have a non-crossing partition on a set of even cardinality. Consequently, exactly one of the elements $a(P_{i,l}), a(P_{i,l'})$ appears as its complex conjugate in the covariance $\on{C}_2(a(P_1),a(P_2))$. Moreover, by property (P), $P_{i,l}=P_{i,l'}$. Thus, each equivalent pair on the same circle contributes a factor $\E(|a(P_{i,l})|^2)= \sigma^2$ to the covariance. So we obtain
\begin{equation}
\on{C}_2(a(P_1),a(P_2)) = \sigma^{2k_1+2k_2-4m} \ \on{C}_2(a(\hat{P}_1),a(\hat{P}_2)),
\label{cov}
\end{equation}
where we have used the shorthand \eqref{kurzprodukt}.
This relation indicates that it will be sufficient to consider the set $\on{M}_n(\hat{\pi})$ of all reduced multi-indices instead of the set $\on{PM}_n(\pi)$. To make this statement more precise, we want to proceed by counting the number of multi-indices $P\in \on{PM}_n(\pi)$ that lead to the same reduced multi-index $\hat{P}\in \on{M}_n(\hat{\pi})$. Therefore, take a partition $\pi\in \DP$ and two indices $(i,l)\sim_\pi (i,l')$ which are not connectors. Define the \emph{initial point} of $\{(i,l),(i,l')\}$ as
\begin{equation*}
\gamma_\pi((i,l),(i,l')) := \begin{cases} l, & \ \text{if} \ {]l,l'[}_{i} \ \text{does not contain connectors}, \\ l', & \ \text{if} \ {]l',l[}_{i} \ \text{does not contain connectors}. \end{cases}
\end{equation*}

Note that $\gamma_\pi((i,l),(i,l'))$ is well-defined since $\pi\in \DP$ is non-crossing, implying that there is no connector in ${]l,l'[}_{i}$ if and only if there is at least one in ${]l',l[}_{i}$. In particular, we say that a pair $\{(i,l), (i,l')\}$ of equivalent points on the same circle is \emph{even} if $\gamma_\pi((i,l),(i,l'))$ is even. Otherwise, we call the pair \emph{odd}. Now we put
\begin{equation*}
\on{even}(\pi) := \# \{\{(i,l), (i,l')\} \in \pi: i\in\{1,2\}, l,l'\in [2k_i], \{(i,l), (i,l')\} \ \text{is even}\}.
\end{equation*}

If we take some $P=((p_{i,l},q_{i,l}))_{i=1,2,\ l=1,\ldots,2k_i}\in \on{PM}_n(\pi)$, 
then by (B2) $\on{even}(\pi)$ can be characterized as the number of pairs $(p_{i,l},q_{i,l}) \sim_n (p_{i,l'},q_{i,l'})$ with $\gamma_\pi((i,l),(i,l'))=l$, such that the element $p_{i,l}=p_{i,l'}$ is not determined by the pairs $(p_{i,j},q_{i,j})$ with $j\in {]l',l[}_{i}$. However, in this case $q_{i,l}=q_{i,l'}$ is uniquely determined by those pairs since $q_{i,l}=q_{i,l-1}$ if $l$ is even.   \\

\begin{lemma}
Let $\pi\in \DP$ and $Q\in \on{M}_n(\hat{\pi})$. Put
\begin{equation*}
\on{PM}_n(\pi;Q)=\left\{P\in \on{PM}_n(\pi): \hat{P}=Q\right\}.
\end{equation*}

We have
\begin{equation}
\frac{1}{n^{k_1+k_2-2m}} \ \# \on{PM}_n(\pi;Q) = \kappa^{\on{even}(\pi)} \mu^{k_1+k_2-2m-\on{even}(\pi)} + o(1).
\label{psn}
\end{equation}
\label{lemmapsn}
\end{lemma}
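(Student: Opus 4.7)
The plan is to invert the reduction procedure of Section \ref{red-multiind}: each element of $\on{PM}_n(\pi;Q)$ is obtained from $Q$ by successively reinserting the $k_1+k_2-2m$ non-connector $2$-blocks of $\pi$, so $\#\on{PM}_n(\pi;Q)$ factors as a product of reinsertion counts, up to a lower-order error. I would set up an induction on $k_1+k_2-2m$. The base case $k_1=k_2=m$ is immediate: then $\pi=\hat\pi$, $\on{PM}_n(\pi;Q)=\{Q\}$ and $\on{even}(\pi)=0$, so the claimed limit is $\kappa^0\mu^0=1$. For the inductive step, a non-crossing partition with at least one non-connector pair always has a nearest-neighbor non-connector block; after removing it and relabeling by \eqref{relabel3} or \eqref{relabel4}, I apply the inductive hypothesis to the resulting pair $(\pi\tra,Q)$ and analyze the single reinsertion step.

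The heart of the proof is the claim that inserting a nearest-neighbor block $\{(i,l),(i,l+1)\}$ admits $s(n)-\calo(n^\vep)$ choices when $l$ is even and $t(n)-\calo(n^\vep)$ choices when $l$ is odd, for every $\vep>0$. By (P) we must set $P_{i,l}=P_{i,l+1}=(p,q)$, and (B2) forces one coordinate of $(p,q)$ to coincide with a coordinate already fixed by $P\tra$: if $l$ is even, then $(l-1,l)$ is a $q$-pair under (B2), so $q=q_{i,l-1}$ is pinned and $p\in[s]$ is free; if $l$ is odd, then $(l-1,l)$ is a $p$-pair, so $p=p_{i,l-1}$ is pinned and $q\in[t]$ is free. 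The outer neighbor constraints at $(l+1,l+2)$ are automatically consistent, since after substitution they coincide with (B2)-constraints already present in $\pi\tra$. The correction $\calo(n^\vep)$ accounts for the requirement that the new pair $(p,q)$ not be $\sim_n$-equivalent to any of the at most $2k$ other already-chosen pairs $P_{i',l'}$ (otherwise $\sim_P$ would be strictly coarser than $\pi$, so $P\notin\on{M}_n(\pi)$); by $\beta_3(n)\le\beta_2(n)=\calo(n^\vep)$ each such pair forbids only $\calo(n^\vep)$ values of the free coordinate.

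To match the parity of $l$ at reinsertion with the parity of the initial point $\gamma_\pi$ of the block in the original partition $\pi$, I would observe that parities are preserved throughout the reduction process: the relabelings \eqref{relabel3} and \eqref{relabel4} shift positions by even amounts, and inside any non-connector pair $\{(i,r),(i,r')\}$ of $\pi$ with $\gamma_\pi=r$ the interval $]r,r'[_i$ contains only nested non-connectors which pair up among themselves, hence $r'-r-1$ is even. Consequently, by the time the pair has become a nearest neighbor $\{(i,l),(i,l+1)\}$ in the current labeling, $l$ has the same parity as $\gamma_\pi$, so the classification ``$s(n)$ versus $t(n)$'' matches precisely the definitions of even and odd pairs.

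Multiplying the reinsertion factors over all $k_1+k_2-2m$ non-connector blocks yields
$$\#\on{PM}_n(\pi;Q)=s(n)^{\on{even}(\pi)}\,t(n)^{k_1+k_2-2m-\on{even}(\pi)}+\calo(n^{k_1+k_2-2m-1+\vep}),$$
and dividing by $n^{k_1+k_2-2m}$ together with \eqref{kappamu} produces \eqref{psn}. The main obstacle is the parity bookkeeping under the cyclic relabeling \eqref{relabel4}, together with the verification that the ``one free coordinate'' picture remains globally consistent, i.e.\ that the coordinates pinned by different insertions never conflict so as to reduce the count further; the dihedrality and non-crossing hypotheses (Lemma \ref{pp-crossing-negl}) are what guarantee the nested structure that makes this accounting local.
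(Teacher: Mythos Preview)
Your proposal is correct and follows essentially the same route as the paper: induction on the number $k_1+k_2-2m$ of non-connector pairs, removing a nearest-neighbor block at each step, and counting $s(n)+\calo(n^\vep)$ versus $t(n)+\calo(n^\vep)$ free choices according to the parity of $l$, with the lower bound coming from the $\beta_2(n)$-controlled exclusion of $\sim_n$-equivalent pairs. The only cosmetic difference is that the paper keeps the parity bookkeeping local---it shows directly that $\on{even}(\pi)=\on{even}(\pi')+[\,l\text{ even}\,]$, since the removed nearest-neighbor block has $\gamma_\pi=l$ in the \emph{current} labeling and the even shifts \eqref{relabel3}, \eqref{relabel4} preserve the status of all other blocks---whereas you trace the parity all the way back to the original $\gamma_\pi$; your argument for this is fine, but the paper's inductive version avoids the need to reason about the full history of relabelings.
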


\begin{proof}
For fixed $m$, we prove the statement by induction over $2k=2k_1+2k_2$. Since $\pi\in \DP$, the smallest value $2k$ can take is $2k=4m$. In this case, we can conclude that $\hat{\pi}=\pi$, implying $\# \on{PM}_n(\pi;Q)=1$ and $\on{even}(\pi)=0$. Thus \eqref{psn} holds without the term $o(1)$. \\

Now, suppose that \eqref{psn} is true for $2k=4m+2(j-1)$ with $j\geq 1$, and consider a $\pi\in \DP$ with $2k=2k_1+2k_2=4m+2j$. Since $\pi$ is non-crossing, there is an index $(i,l)$ such that $(i,l)\sim_\pi (i,l+1)$. Without loss of generality, we assume that $i=1$. Consider the partition $\pi'\in \mathcal{DNPP}_{[2k_1-2]\cup [2k_2]}^{2m}$ obtained by eliminating the block $\{(1,l),(1,l+1)\}$, and relabeling as in \eqref{relabel3} or \eqref{relabel4}. Then the inductive hypothesis guarantees that
\begin{equation*}
\frac{1}{n^{(k_1-1)+k_2-2m}} \ \# \on{PM}_n(\pi';Q) = \kappa^{\on{even}(\pi\tra)} \mu^{(k_1-1)+k_2-2m-\on{even}(\pi\tra)} + o(1).
\end{equation*}

Now, it is possible to extend any multi-index $P'\in \on{PM}_n(\pi';Q)$ to a multi-index $P\in \on{PM}_n(\pi;Q)$ by specifying  $P_{1,l}=(p_{1,l},q_{1,l})$ and $P_{1,l+1}=(p_{1,l+1},q_{1,l+1})$. Since ${]l,l+1[}_{1}=\emptyset$ contains no connectors, we conclude that $\gamma_\pi((1,l),(1,l+1))=l$. First suppose that $l$ is even. In this case, the consistency condition (B2) yields that the elements $q_{1,l}=q_{1,l+1}$ are already determined by $P'$, and we only have to choose $p_{1,l}=p_{1,l+1}$. There are at most $s(n)$ possibilities to do so. This leads to the upper bound
\begin{equation*}
\# \on{PM}_n(\pi;Q)\leq s(n) \ \# \on{PM}_n(\pi';Q).
\end{equation*}

To find a lower bound, note that the fact that $\{(i, l),(i, l+1)\}$ is a block of $\pi$ implies that $(p_{i,l}, q_{i,l}) = (p_{i,l+1}, q_{i,l+1})$ is not in any $\sim_n$-block of any index pair from $P\tra$. (This is a requirement that could be safely
neglected in the previous bounding exercises that aimed at upper bounds.) Since $\pi\tra$ has at most
$2m + j - 1$ blocks, we obtain the estimate 
\begin{align*}
\# \on{PM}_n(\pi;Q)
& \geq \left(s(n)-(2m+j-1) \ \beta_2(n)\right) \ \# \on{PM}_n(\pi';Q) \\
& = \left(s(n)-\mathcal{O}\left(n^\varepsilon\right)\right) \ \# \on{PM}_n(\pi';Q).
\end{align*}

Since by assumption, $\frac{s(n)}{n}\to \kappa$ as $n\to \infty$, we can combine these bounds to obtain
\begin{equation*}
\frac{1}{n^{k_1+k_2-2m}} \ \# \on{PM}_n(\pi;Q) = \kappa^{\on{even}(\pi\tra)+1} \mu^{k_1+k_2-2m-(\on{even}(\pi\tra)+1)} + o(1).
\end{equation*}

Note that any equivalent pair $(j,r)\sim_\pi (j,r')$, $(j,r)\notin \{(1,l),(1,l+1)\}$, is even with respect to $\pi$ if and only if its relabeled version is even with respect to $\pi'$. Since further $\{(1,l),(1,l+1)\}$ was chosen to be even, we obtain $\on{even}(\pi)=\on{even}\left(\pi'\right)+1$. This concludes the proof for the even case.\\
 Now suppose that $l$ is odd. This implies that the element $p_{1,l}=p_{1,l+1}$ is already determined by $P'$ and we need to choose $q_{1,l}=q_{1,l+1}$. This time, there are at most $t(n)$ possibilities, and $\frac{t(n)}{n}\to \mu$. Proceeding as in the even case, we see that
\begin{equation*}
\frac{1}{n^{k_1+k_2-2m}} \ \# \on{PM}_n(\pi;Q) = \kappa^{\on{even}(\pi\tra)} \mu^{k_1+k_2-2m-\on{even}(\pi\tra)} + o(1).
\end{equation*}

Since $\gamma_\pi((1,l),(1,l+1))=l$ is odd, the identity $\on{even}(\pi)= \on{even}\left(\pi'\right)$ holds, which proves the second case.

\end{proof}

Now, using \eqref{cov}, Lemma~\ref{lemmapsn}, and the shorthand \eqref{kurzprodukt}, equation \eqref{CovarDNPP2} becomes
\begin{equation*}
\begin{split}
& \on{C}_2(\tr(\W^{k_1}),\tr(\W^{k_2})) \\
& = \sum_{m=1}^{k_1 \wedge k_2} \frac{\sigma^{2k-4m}}{n^{2m}} \sum_{\pi\in \DP} 
\kappa^{\on{even}(\pi)} \mu^{k-2m-\on{even}(\pi)}
 \sum_{P\in \on{M}_n(\hat{\pi})} \on{C}_2(a(P_1),a(P_2)) + o(1).
\end{split}
\end{equation*} 

Recall that for $m \ge 2$ the dihedral group $\on{D}_{4m}$ can be identified with $\mathcal{DNPP}_{[2m]\cup [2m]}^{2m}$
(see the paragraph preceding Lemma \ref{dihedral}). In particular, for any partition $\pi\in\DP$, there is some $g\in D_{4m}$ 
such that $\hat{\pi}=\pi_g$, where $\pi_g$ is the partition with blocks $\{\{(1,r),(2,g(r))\}: r=1,\ldots,2m\}$. Now take $g\in D_{4m}$ and define for any $0\leq j \leq k-2m$
\begin{equation*}
A_{2k_1,2k_2}^{2m,j} := \# \left\{\pi\in \DP : \hat{\pi} = \pi_g, \ \on{even}(\pi)=j \right\}.
\end{equation*}

It will emerge from \eqref{unabh-von-g} below that $A_{2k_1,2k_2}^{2m,j}$ is independent of $g \in \on{D}_{4m}$. If $m=1$, we have to introduce a slightly different notation. This is due to the fact that by definition, the set $\mathcal{DNPP}_{[2]\cup [2]}^{2}$ contains exactly one element given by the $4$-block $\{\{(1,1),(1,2),(2,1),(2,2)\}\}$. In particular, any partition $\pi\in\mathcal{DNPP}_{[2k_1]\cup [2k_2]}^{2}$ induces the same reduced partition. With this in mind, we define $D_4:=\{\on{id}_{\{1,2\}}\}$, $\pi_{\on{id}_{\{1,2\}}}=\{\{(1,1),(1,2),(2,1),(2,2)\}\}$, and
\begin{align*}
A_{2k_1,2k_2}^{2,j} 
:&= \# \left\{\pi\in \mathcal{DNPP}_{[2k_1]\cup [2k_2]}^{2} : \hat{\pi} = \pi_g, \ \on{even}(\pi)=j \right\} \\
&= \# \left\{\pi\in \mathcal{DNPP}_{[2k_1]\cup [2k_2]}^{2} : \on{even}(\pi)=j \right\}.
\end{align*}

So we have
\begin{align}
& \on{C}_2(\tr(\W^{k_1}), \tr(\W^{k_2}))\nonumber \\
& = \sum_{m=1}^{k_1 \wedge k_2} \frac{\sigma^{2k-4m}}{n^{2m}} \sum_{j=0}^{k-2m} \kappa^j \mu^{k-2m-j} \ A_{2k_1,2k_2}^{2m,j} \sum_{g\in D_{4m}} \sum_{P\in \on{M}_n(\pi_g)} \on{C}_2(a(P_1),a(P_2)) + o(1) \nonumber\\
& = \sum_{m=1}^{k_1 \wedge k_2} \sigma^{2k-4m} \ U_n(2m) \sum_{j=0}^{k-2m} \left(\frac{\kappa}{\mu}\right)^j \ A_{2k_1,2k_2}^{2m,j} + o(1), \label{covar-mit-A}
\end{align}

where
\begin{equation*}
U_n(2m)=\frac{\mu^{k-2m}}{n^{2m}} \sum_{g\in D_{4m}} \sum_{P\in \on{M}_n(\pi_g)} \on{C}_2(a(P_1),a(P_2)).
\end{equation*}

To determine $\on{C}_2(a(P_1),a(P_2))$, observe that for any $g\in D_{4m}$, $m\geq 2$, $\pi_g$ has no equivalent elements on the same circle, so we can conclude that $\E(a(P_i))=0$. Further, all blocks of $\pi_g$ are of the form $\{l,g(l)\}$. Hence, for $P \in \on{M}_n(\pi_g)$, 
\begin{equation*}
\on{C}_2(a(P_1),a(P_2)) = \prod_{l=1}^{2m} \E(a(P_{1,l})^{\vep(l)} a(P_{2,g(l)})^{\vep(g(l))}),
\end{equation*}

where $a(P_{i,l})^{\vep(l)} = a(P_{i,l})$ if $l$ is odd and $a(P_{i,l})^{\vep(l)} = \overline{a(P_{i,l})}$ if $l$ is even. On the other hand, if $m=1$, condition (B2) yields $P_{1,1}=P_{1,2}$ and $P_{2,1}=P_{2,2}$. Thus, we arrive at
\begin{equation*}
\on{C}_2(a(P_1),a(P_2)) = \on{C}_2(|a(P_{1,1})|^2, |a(P_{2,1})|^2).
\end{equation*}

We then have
\begin{equation}
U_n(2m)=\left\{
\begin{aligned}
\frac{\mu^{k-2}}{n^{2}} \sum_{\substack{(p,q),(p',q')\in [s(n)]\times[t(n)], \\ (p,q)\sim_n (p',q')}} \on{C}_2(|a(p,q)|^2, |a(p',q')|^2), \quad & m=1, \\ 
\frac{\mu^{k-2m}}{n^{2m}} \sum_{g\in D_{4m}} \sum_{P\in \on{M}_n(\pi_g)} \prod_{l=1}^{2m} \E(a(P_{1,l})^{\vep(l)} a(P_{2,g(l)})^{\vep(g(l))}), \quad & m\geq 2.
\end{aligned}
\right.
\label{un}
\end{equation}

To evaluate the sum in \eqref{covar-mit-A}, which involves the values $A_{2k_1,2k_2}^{2m,j}$, we decompose a partition $\pi \in \DP$ into two partitions $\pi_1$ and $\pi_2$ by cutting all links between connectors (cf. \cite{KusalikMingoSpeicher}, \cite{Schenker_Schulz-Baldes2}). To be precise, for $i = 1, 2$ we define
\begin{equation*}
l\sim_{\pi_i} l\tra\quad :\Longleftrightarrow\quad (i,l) \sim_{\pi} (i,l'), \ (i,l),(i,l') \ \text{are not connectors}.
\end{equation*}

The partitions $\pi_i$, $i=1,2$, are called \emph{non-crossing half pair partitions}. In general, a non-crossing half pair partition of $[k]$ consists of $2$-blocks and $1$-blocks, called \emph{open connectors}. Further, if $l$ and $l'$ form a $2$-block, then any two points $q\in {]l,l'[}$ and $q'\in {]l',l[}$ are not in the same block and at most one of them is an open connector. In analogy to \cite{Schenker_Schulz-Baldes2}, we denote the set of non-crossing half pair partitions of $[k]$ with $m$ open connectors by $\mathcal{NHPP}_{[k]}^m$. Now take $\pi\in\mathcal{NHPP}_{[k]}^m$. In accordance with the definition of the map $\on{even}(\cdot)$ on $\DP$, we put for any $l\sim_\pi l'$
\begin{equation*}
\gamma_\pi(l,l') := \begin{cases} l, & \ \text{if} \ {]l,l'[} \ \text{does not contain open connectors}, \\ l', & \ \text{if} \ {]l',l[} \ \text{does not contain 
open connectors}, \end{cases}
\end{equation*}

and
\begin{equation*}
\on{even}(\pi) := \# \{ \{l,l'\} \in \pi:\ \gamma_\pi(l,l') \ \text{is even}\}.
\end{equation*}

Furthermore, for any $0\leq j \leq k-m$, let
\begin{equation*}
\mathcal{NHPP}_{[2k]}^{2m,j} := \{\pi\in\mathcal{NHPP}_{[2k]}^{2m}:\on{even}(\pi)=j\}.
\end{equation*}

Note that if $\pi\in \DP$ is decomposed into $\pi_1$ and $\pi_2$ as described above, we can reconstruct $\pi$ uniquely if we know the structure of the connectors, that is $\hat{\pi}$. Hence we have a bijection
\begin{multline*}
 \{\pi \in \DP : \hat{\pi} = \pi_g, \ \on{even}(\pi)=j\} \\
 \to \bigcup_{i= \max\{0,j-(k_2-m)\}}^{\min\{j,k_1-m\}} \ \mathcal{NHPP}_{[2k_1]}^{2m,i} \times \mathcal{NHPP}_{[2k_2]}^{2m,j-i},
\end{multline*}

implying
\begin{equation}
\label{unabh-von-g}
A_{2k_1,2k_2}^{2m,j} = \sum_{i=\max\{0,j-(k_2-m)\}}^{\min\{j,k_1-m\}} \ \# \mathcal{NHPP}_{[2k_1]}^{2m,i} \ \cdot \ \# \mathcal{NHPP}_{[2k_2]}^{2m,j-i}.
\end{equation}

In particular, $A_{2k_1,2k_2}^{2m,j}$ does not depend on $g\in D_{4m}$. Now we have
\begin{align*}
& \sum_{j=0}^{k_1+k_2-2m} \left(\frac{\kappa}{\mu}\right)^j \ A_{2k_1,2k_2}^{2m,j} \\ 
&\hspace{1cm} = \sum_{j=0}^{k_1+k_2-2m}  \sum_{i=\max\{0,j-(k_2-m)\}}^{\min\{j,k_1-m\}} \ \left(\frac{\kappa}{\mu}\right)^i \ \# \mathcal{NHPP}_{[2k_1]}^{2m,i} \ 
\cdot \ \left(\frac{\kappa}{\mu}\right)^{j-i} \ \# \mathcal{NHPP}_{[2k_2]}^{2m,j-i} \\
&\hspace{1cm} = \sum_{i=0}^{k_1-m} \ \sum_{j=i}^{k_2-m+i} \ \left(\frac{\kappa}{\mu}\right)^i \ 
\# \mathcal{NHPP}_{[2k_1]}^{2m,i} \ \cdot \ \left(\frac{\kappa}{\mu}\right)^{j-i} \ \# \mathcal{NHPP}_{[2k_2]}^{2m,j-i} \\
&\hspace{1cm} = \sum_{i = 0}^{k_1 - m} \ \left(\frac{\kappa}{\mu}\right)^{i} \ \# \mathcal{NHPP}_{[2k_1]}^{2m,i} \ \cdot \
\sum_{j = 0}^{k_2 - m} \ \left(\frac{\kappa}{\mu}\right)^{j} \ \# \mathcal{NHPP}_{[2k_2]}^{2m,j}.
\end{align*}

Defining
\begin{equation}
\label{Gkm}
G_{k,m} := \sum_{j = 0}^{k - m} \ \left(\frac{\kappa}{\mu}\right)^{j} \ \# \mathcal{NHPP}_{[2k]}^{2m,j}, \quad k \in \nn, \ m = 1,\ldots, k,
\end{equation}

we obtain
\begin{equation} \label{covar-undiag}
\begin{split}
\on{C}_2(\tr(\W^{k_1}),\tr(\W^{k_2})) = \sum_{m=1}^{k_1 \wedge k_2} \sigma^{2k_1+2k_2-4m} \ G_{k_1,m} \ G_{k_2,m} \ U_n(2m) + o(1).
\end{split}
\end{equation}

\section{Covariances and Chebyshev Polynomials}
\label{chebyshev}

The aim of this section is to apply the calculations we made so far to compute the covariance for shifted and re-scaled Chebyshev polynomials. This will complete the proof of Theorem~\ref{maintheo}. To this end, we start with the monic Chebyshev polynomials $\{T_k,\ k\geq 1\}$ of the first kind on the interval $[-2, 2]$, defined by the trigonometric identity
\begin{equation*}
T_k(2\cos(\vartheta)) = 2 \cos(k\vartheta).
\end{equation*}

Put $T_{-1}(x):=0$ and $T_0(x):=1$. Then the polynomials satisfy the recurrence relation
\begin{equation}
xT_k(x) = T_{k+1}(x) + (1+\delta_{k,1}) T_{k-1}(x), \quad k\geq 0,
\label{rec}
\end{equation}

and are orthogonal for the dilated arc-sine law $\frac{dx}{\sqrt{4-x^2}}$. A slight modification of the $(T_k)$ yields a
family of orthogonal polynomials that has been used by Cabanal-Duvillard in \cite{CD} to diagonalize the fluctuations of Wishart matrices. Fix
$ y \in (0, \infty)$ (which will eventually be chosen as $\kappa/\mu$), set $a := (\sqrt{y} - 1)^2,\ b := (\sqrt{y} + 1)^2,$ and define for $a \le x \le b$:
\begin{equation*}
\Gamma_k (x) := \Gamma_k^y(x) := \sqrt{y^k} \ T_k \left(\frac{x-(1+y)}{\sqrt{y}}\right), \quad k\geq 0.
\end{equation*}

Then the polynomials $\{\Gamma_k,\ k\geq 0\}$ are orthogonal for the shifted arc-sine law $\frac{dx}{\sqrt{(b-x)(x-a)}}$ on $(a,b)$ and satisfy the recurrence relation
\begin{equation}
x \Gamma_k (x) = \Gamma_{k+1}(x) + (1+y) \Gamma_k (x) + (1+\delta_{k,1})  y \Gamma_{k-1} (x), \quad k\geq 0,
\label{recrel}
\end{equation}

where $\Gamma_{-1}:= 0$ for convenience. Define the re-scaled versions 
\begin{equation*}
\Gamma_k(x,\sigma) = \sigma^{2k} \ \Gamma_k\left(\frac{x}{\sigma^2}\right), \qquad \sigma >0,
\end{equation*}

and write
\begin{equation*}
\Gamma_k (x,\sigma) = \sum_{m=0}^{k} \sigma^{2k-2m} \ g_{k,m}' \ x^m,
\end{equation*}
where $g_{k,k}' = 1$. Let $\Gamma$ denote the lower triangular matrix with entries $g_{k,m}'$, that is
\begin{equation*}
\Gamma := \begin{pmatrix}
	1 & 0 & 0 & 0 & \cdots \\ 
	g_{1,0}' & 1 & 0 & 0 & \cdots \\
	g_{2,0}' & g_{2,1}' & 1 & 0 & \cdots \\
	\vdots & \vdots & \vdots &  
\end{pmatrix}.
\end{equation*}

The inverse of this infinite dimensional matrix can be found by inverting the finite principal minors. We then see that $\Gamma^{-1}$ is also a lower triangular matrix. Thus, we put
\begin{equation} \label{GammaInv}
\Gamma^{-1} =: \begin{pmatrix}
	1 & 0 & 0 & 0 & \cdots \\ 
	g_{1,0} & 1 & 0 & 0 & \cdots \\
	g_{2,0} & g_{2,1} & 1 & 0 & \cdots \\
	\vdots & \vdots & \vdots &  
\end{pmatrix}.
\end{equation}

We set $g_{k,k}:=1$ for any $k\in\N$, and $g_{k,m}:=0$ in case $m>k$. It will be proven in the appendix that if $y$ is chosen as $\kappa/\mu$, then for any $k \in \nn,\
m = 1, \ldots, k$, one has $g_{k, m} = G_{k, m}$, where the latter was defined in \eqref{Gkm} above. Combining this result with \eqref{covar-undiag}, we obtain
\begin{align*}
\on{C}_2(\tr(\Gamma_m&(\W, \sigma)), \tr(\Gamma_l(\W, \sigma)))\\ &= \sum_{k_1=0}^{m} \sum_{k_2=0}^{l} \sigma^{2m+2l-2k_1-2k_2} \ g_{m,k_1}' \ g_{l,k_2}'\ \on{C}_2(\tr(\W^{k_1}), \tr(\W^{k_2})) \\
& = \sum_{k_1=0}^{m} \sum_{k_2=0}^{l} \sum_{p=1}^{k_1 \wedge k_2} \sigma^{2m+2l-4p} \ g_{k_1,p} \ g_{k_2,p} \ g_{m,k_1}' \ g_{l,k_2}' \ U_{n}(2p)  + o(1)\\
& = \sum_{p=1}^{\infty} \sigma^{2m+2l-4p}\  U_{n}(2p) \sum_{k_1=0}^{m} \sum_{k_2=0}^{l}   \ g_{m,k_1}' \ g_{k_1,p} \ g_{l,k_2}' \ g_{k_2,p}     + o(1).
\end{align*}

Since $\sum_{k=0}^m g_{m,k}' \ g_{k,p} = \delta_{m,p}$ for any $p = 1, \ldots, m$, this implies
\begin{align*}
\on{C}_2(\tr(\Gamma_m(\W, \sigma)), \tr(\Gamma_l(\W, \sigma))) & = \sum_{p=1}^{m \wedge l} \sigma^{2m+2l-4p} \ \delta_{m,p} \ \delta_{l,p} \ U_{n}(2p) + o(1) \\
& = \delta_{m,l} \ U_{n}(2m) + o(1).
\end{align*}

This is exactly the second part of the statement of Theorem~\ref{maintheo}.

\section{Shortcuts in the proof using the Wigner case}
\label{shortcuts}

Many steps in the above proof have run in parallel to the corresponding steps in the treatment of the Wigner case that was provided by Schenker and Schulz-Baldes in
\cite{Schenker_Schulz-Baldes2}. We have chosen to explain this proof in full detail in order to make our paper accessible without assuming familiarity with \cite{Schenker_Schulz-Baldes2}.
Nevertheless it should be noted that by representing sample covariance matrices as ``chiral'' hermitian matrices, i.e.\ as elements of the tangent space to a symmetric space
of type AIII (see \cite{stolzcredner}), one may avoid a few pedestrian arguments by citing the corresponding lemmata in \cite{Schenker_Schulz-Baldes2}.
This will be explained in the present section, where we will freely use notations and results from \cite{Schenker_Schulz-Baldes2}. \\

In order to reduce the case of sample covariance matrices to that of Wigner matrices, we define
\begin{equation*}
  \textbf{H}_n:= \left(\frac{1}{\sqrt{n}}\ b_n(p,q)\right)_{p,q=1,\ldots,s(n)+t(n)}:=
   \begin{pmatrix}
	  0 & \Y_n \\ \Y_n^* & 0
   \end{pmatrix}.
\end{equation*} \\
$\textbf{H}_n$ is a Hermitian $\left(s(n)+t(n)\right)\times \left(s(n)+t(n)\right)$ matrix with $\E\left( b_n(p,q)\right) = 0$ for any $(p,q)\in [s(n)+t(n)]^2$ and, by \eqref{mombound},
\begin{equation*}
\sup_{n\in\N} \ \max_{(p,q)\in [s(n)+t(n)]^2} \E( |b_n(p,q)|^k) = m_k < \infty
\end{equation*}

for any $k\in\N$. The idea to consider $\HM_n$ arises from the relation
\begin{equation}
  \tr \left(\HM_n^{2k}\right) = 2 \ \tr\left( \W_n^{k}\right),
\label{trH}
\end{equation} 

which allows us to calculate traces of $\W_n$ if those of $\HM_n$ are known. In order to apply the results of \cite{Schenker_Schulz-Baldes2}, we need to introduce an equivalence relation $\sim_n^*$ on $[s(n)+t(n)]^2$ which appropriately describes the correlations between the entries $\{b_n(p,q), 1\leq p,q\leq s(n)+t(n)\}$. Hence, start with the set
\begin{equation}
D_n \ := \ [s(n)]^2 \ \cup \ [s(n)+1,s(n)+t(n)]^2.
\label{A_n}
\end{equation} 

If $(p,q)\in D_n$, then $b_n (p,q) \equiv 0$, and we take $\{(p,q),(q,p)\}$ to be an equivalence class with respect to $\sim_{n}^*$. Considering, however, the set $[s(n)+t(n)]^2\backslash D_n$, we introduce a map
\begin{equation*}
\psi: [s(n)+t(n)]^2\backslash D_n \to [s(n)]\times [t(n)],
\end{equation*}

with
\begin{equation*}
\psi((p,q)) = \begin{cases} (p,q-s(n)), &\text{if} \ q\in [s(n)+1,s(n)+t(n)], \\
														(q,p-s(n)), &\text{if} \ p\in [s(n)+1,s(n)+t(n)]. \end{cases}
\end{equation*} 

Note that $b_n(p,q)=a_n(\psi(p,q))$ if $q\in [s(n)+1,s(n)+t(n)]$ and $b_n(p,q) = \overline{a_n(\psi(p,q))}$ if $p\in [s(n)+1,s(n)+t(n)]$. 
We thus define
$(p,q)\sim_{n}^*(p',q') \in [s(n)+t(n)]^2\backslash D_n$ if and only if $\psi((p,q))\sim_n \psi((p',q'))$. In accordance with the notation in \cite{Schenker_Schulz-Baldes2}, put
\begin{align*}
\hat{\alpha}_0(n) &:= \#\left\{(p,p',q)\in [s(n)+t(n)]^3 : (p,q)\sim_n^* (q,p') \ \& \ p\neq p'\right\}, \\ & \\
\alpha_2(n)				&:= \max_{(p,q)\in [s(n)+t(n)]^2} \#\left\{(p',q')\in [s(n)+t(n)]^2 : (p,q)\sim_{n}^* (p',q')\right\},
\end{align*}

and note that $\hat{\alpha}_0(n) \leq 2 \beta_0(n)$ and $\alpha_2(n) = 2\beta_2(n)$. \\

To prove the first statement of Theorem~\ref{maintheo}, we use the multilinearity of cumulants and relation \eqref{trH} to obtain
\begin{equation}
  \on{C}_j(\tr(\W_n^{k_1}),\ldots,\tr(\W_n^{k_j})) = \frac{1}{2^j} \on{C}_j(\tr(\textbf{H}_n^{2k_1}),\ldots,\tr(\textbf{H}_n^{2k_j})).
\label{cum}
\end{equation} \\
By assumption, $\alpha_2(n) = 2\beta_2(n) = \mathcal{O}(n^\varepsilon)$ for any $\varepsilon>0$. Hence, $\HM_{n}$ satisfies the conditions of Theorem 2.1 in \cite{Schenker_Schulz-Baldes2}, implying that the right hand side of \eqref{cum} is $o(1)$ if $j \ge 3$. \\

To verify the second part of Theorem~\ref{maintheo}, it is not possible to apply the corresponding results in \cite{Schenker_Schulz-Baldes2} in a straightforward manner. The difficulty is that the blocks on the diagonal of $\HM_n$ are zero. However, Theorem 2.4 in \cite{Schenker_Schulz-Baldes2} requires the same variance for all entries. Nevertheless, we can use at least parts of the proof to see that the covariance can be calculated for any $k_1,k_2\in\N$ as
\begin{multline}
\on{C}_2(\tr(\textbf{H}_{n}^{k_1}),\tr(\textbf{H}_{n}^{k_2})) = \\
\frac{1}{n^{\frac{k_1+k_2}{2}}} \sum_{m=1}^{k_1\wedge k_2} \sum_{\pi\in \mathcal{DNPP}_{[k_1]\cup [k_2]}^{m}} \sum_{P\in \on{PS}_{s(n)+t(n)}(\pi)} \on{C}_2\left(\prod_{l=1}^{k_1} b_n (P_{1,l}), \prod_{l=1}^{k_2} b_n (P_{2,l})\right) + o(1),
\label{eq1}
\end{multline} 

where $\on{PS}_{s(n)+t(n)}(\pi)$ is the set of all $P=(P_{i,l})_{i=1,2, l=1,\ldots,k_i}=((p_{i,l},q_{i,l}))_{i=1,2, l=1,\ldots,k_i}$ satisfying \\
\begin{enumerate}
	\item[(C1)] $P_{i,l} \in [s(n)+t(n)]^2$,
	\item[(C2)] $q_{i,l}=p_{i,l+1}$, where $k_i+1$ is identified with $1$,
	\item[(C3)] $(i,l)\sim_{\pi} (i',l') \Longleftrightarrow P_{i,l} \sim_{n}^* P_{i',l'}$,
	\item[(C4)] if $(i,l)\sim_\pi (i,l')$, we have $P_{i,l} = (p,q)$ and $P_{i,l'} = (q,p)$ with $p\neq q$.
\end{enumerate}

\grab

To circumvent the problem of non-identical variances, the idea is to simply eliminate those entries which are equal to zero. Thus, as at the beginning of this section, we put $D_n \ := \ [s(n)]^2 \ \cup \ [s(n)+1,s(n)+t(n)]^2$. In particular, $b_n(p,q)\equiv 0$ if and only if $(p,q)\in D_n$. Denote by $\on{PS}_{s(n)+t(n)}^*(\pi)$ the set of all $P\in \on{PS}_{s(n)+t(n)}(\pi)$ such that \\
\begin{enumerate}
	\item[($\ast$)] $(p_{i,l},q_{i,l}) \in [s(n)+t(n)]^2\backslash D_n$ for all $i = 1,2;\ l = 1, \ldots, k_i$.
\end{enumerate}

\grab

Now $P\in \on{PS}_{s(n)+t(n)}(\pi) \backslash \on{PS}_{s(n)+t(n)}^*(\pi)$ implies
\begin{equation*}
\on{C}_2\left(\prod_{l=1}^{k_1} b_n (\textbf{P}_{1,l}), \prod_{l=1}^{k_2} b_n (\textbf{P}_{2,l})\right) = 0.
\end{equation*}

We can thus consider the set $\on{PS}_{s(n)+t(n)}^*(\pi)$ instead of $\on{PS}_{s(n)+t(n)}(\pi)$ in equation \eqref{eq1}. Note that relation \eqref{trH} yields
\begin{equation*}
\on{C}_2(\tr(\W_{n}^{k_1}),\tr(\W_{n}^{k_2})) = \frac{1}{4} \ \on{C}_2(\tr(\textbf{H}_{n}^{2k_1}), \tr(\textbf{H}_{n}^{2k_2})).
\end{equation*}

Furthermore there are no dihedral partitions of $[2k_1]\cup [2k_2]$ with an odd number of connectors, that is $\mathcal{DNPP}_{[2k_1]\cup [2k_2]}^{m} = \emptyset$ whenever $m$ is odd. Hence, we find that
\begin{multline}
\on{C}_2(\tr(\W_{n}^{k_1}),\tr(\W_{n}^{k_2})) = \\
\frac{1}{4 n^{k_1+k_2}} \sum_{m=1}^{k_1\wedge k_2} \sum_{\pi\in \mathcal{DNPP}_{[2k_1]\cup [2k_2]}^{2m}} \sum_{P\in \on{PS}_{s(n)+t(n)}^*(\pi)} \on{C}_2\left(\prod_{l=1}^{2k_1} b_n (P_{1,l}), \prod_{l=1}^{2k_2} b_n (P_{2,l})\right) + o(1).
\label{covwn}
\end{multline} 

To recover the results from the previous sections, we need to describe the sets $\on{PS}_{s(n)+t(n)}^*(\pi)$ in terms of $\on{PM}_n(\pi)$. To this end, define
\begin{align*}
\on{PS}_{s(n)+t(n)}^* := \bigcup_{\eta\in\DP} \on{PS}_{s(n)+t(n)}^*(\eta),
\end{align*}

and
\begin{align*}
\on{PM}_n := \bigcup_{\eta\in\DP} \on{PM}_n(\eta).
\end{align*}

In order to compare $\on{PM}_n$ with $\on{PS}_{s(n)+t(n)}^*$, introduce a map
\begin{align*}
\Phi = (\varphi_1,\varphi_2): \on{PS}_{s(n)+t(n)}^* \to \on{PM}_n, \quad P \mapsto \Phi(P) = (\varphi_1(P_1),\varphi_2(P_2)),
\end{align*}

where for any $P=(P_{i,l})_{i=1,2,~l=1,\ldots,2k_i}$, we put $P_i := (P_{i,l})_{l=1,\ldots,2k_i}$ $(i = 1, 2)$.
The aim is to define $\Phi$ in such a way that the covariances are invariant under this mapping, that is for any $Q=\Phi(P)$, we wish to have
\begin{equation}
\on{C}_2\left(\prod_{l=1}^{2k_1} b_n (P_{1,l}), \prod_{l=1}^{2k_2} b_n (P_{2,l})\right) = \on{C}_2\left(a_n(Q_1), a_n(Q_2)\right),
\label{covinv}
\end{equation}

where $a_n(Q_i)=\prod_{l=1}^{k_i} a_n (Q_{i,2l-1}) \overline{a_n (Q_{i,2l})}$. This can be achieved as follows: \\

Fix $\pi \in \mathcal{DNPP}_{[2k_1]\cup [2k_2]}^{2m}$ and $P = ((p_{i,l},q_{i,l}))_{i=1,2,\ l=1,\ldots,2k_i}\in \on{PS}_{s(n)+t(n)}^*(\pi)$. By definition of $\on{PS}_{s(n)+t(n)}^*(\pi)$, $P=((p_{i,l},p_{i,l+1}))_{i=1,2,\ l=1,\ldots,2k_i}$, where $2k_i+1$ is identified with $1$. Furthermore, ($\ast$) guarantees that for any $i=1,2$, either \\
\begin{enumerate}
	\item[(I)] $p_{i,2l-1}\in [s(n)]$ and $p_{i,2l}\in [s(n)+1,s(n)+t(n)]$ for any $l$,
\end{enumerate}

or

\begin{enumerate}
	\item[(II)] $p_{i,2l}\in [s(n)]$ and $p_{i,2l-1}\in [s(n)+1,s(n)+t(n)]$ for any $l$.
\end{enumerate}

\grab

If case (I) holds, we define 
\begin{equation*}
\varphi_i(P_i)_{2l-1} = (p_{i,2l-1},p_{i,2l}-s(n)), \quad \varphi_i(P_i)_{2l} = (p_{i,2l+1},p_{i,2l}-s(n)).
\end{equation*}

On the other hand, if (II) holds, we put 
\begin{equation*}
\varphi_i(P_i)_{2l+1} = (p_{i,2l},p_{i,2l+1}-s(n)), \quad \varphi_i(P_i)_{2l} = (p_{i,2l},p_{i,2l-1}-s(n)).
\end{equation*}

Note that in the latter case, $\varphi_i$ shifts all pairs by $1$ to the right. Otherwise, condition (B2) would not hold. Hence $\Phi(P)= (\varphi_1(P_1),\varphi_2(P_2)) \in \on{PM}_{n}(\eta)$ for some partition $\eta$ which might be different from $\pi$. However, $\Phi$ sends adjacent connectors to adjacent connectors. Thus $\eta\in \DP$, implying that $\Phi$ indeed maps to $\on{PM}_{n}$. In particular, note that for any element $Q\in \on{PM}_{n}$, we have
\begin{align*}
\# \{P\in \on{PS}_{s(n)+t(n)}^*: \Phi(P)=Q\}
= 4.
\end{align*}

Now put $Q=\Phi(P)$. Then
\begin{equation*}
\prod_{l=1}^{2k_i} b_n (P_{i,l}) = \prod_{l=1}^{k_i} a_n(Q_{i,2l-1})\cdot \overline{a_n(Q_{i,2l})} = a_n(Q_i),
\end{equation*}

regardless of whether (I) or (II) holds. Consequently, we have the identity in \eqref{covinv} implying that the covariance depends only on the image of $P$ under $\Phi$. To sum up our results, we obtain
\begin{multline*}
\sum_{\pi \in \mathcal{DNPP}_{[2k_1]\cup [2k_2]}^{2m}} \sum_{P\in \on{PS}_{s(n)+t(n)}^*(\pi)} \on{C}_2\left(\prod_{l=1}^{2k_1} b_n (P_{1,l}),\prod_{l=1}^{2k_2} b_n (P_{2,l})\right) \\
 = 4 \ \sum_{\pi \in \mathcal{DNPP}_{[2k_1]\cup [2k_2]}^{2m}} \sum_{Q \in \on{PM}_n (\pi)} \on{C}_2\left(a_n(Q_1),a_n(Q_2)\right).
\end{multline*}

Thus, equation \eqref{covwn} becomes
\begin{equation*}
\begin{split}
\on{C}_2(\tr(\textbf{W}_n^{k_1}),\tr(\textbf{W}_n^{k_2})) 
= \frac{1}{n^{k_1+k_2}} \sum_{m=1}^{k_1\wedge k_2} \sum_{\pi\in\DP} \sum_{P\in \on{PM}_n(\pi)} \on{C}_2(a_n(P_1),a_n(P_2)) + o(1).
\end{split}
\end{equation*} 

This is exactly equation \eqref{CovarDNPP2} above. Starting from that, we may now complete the proof of Theorem \ref{maintheo} as above in 
Sections \ref{red-multiind} and \ref{chebyshev}.

\begin{appendix}
\section{Half pair partitions and Chebyshev polynomials}

In this appendix we show that a quantity that was defined in \eqref{Gkm} in terms of non-crossing half pair partitions and a quantity 
that was defined in \eqref{GammaInv} in terms of Chebyshev polynomials are in fact equal.

\begin{prop}
For any $k\geq 1$ and $1\leq m\leq k$, we have
\begin{equation*}
G_{k,m} = g_{k,m}.
\end{equation*}
\label{Gg}
\end{prop}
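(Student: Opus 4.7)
The strategy is to establish a common three-term recurrence satisfied by both $(g_{k,m})$ and $(G_{k,m})$ with matching boundary data, and then close by induction on $k$.

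First I would derive a recurrence for $g_{k,m}$. Setting $\sigma=1$, the inversion identity $\Gamma\Gamma^{-1}=I$ translates into the basis expansion $x^k = \sum_{m=0}^k g_{k,m}\Gamma_m(x)$. Multiplying by $x$, applying the three-term recurrence \eqref{recrel} to each $x\Gamma_m(x)$, and matching coefficients of $\Gamma_m$ in $x^{k+1}=\sum_m g_{k+1,m}\Gamma_m(x)$ yields
\begin{equation}
g_{k+1,m} = g_{k,m-1} + (1+y)\,g_{k,m} + (1+\delta_{m,0})\,y\,g_{k,m+1},\quad y=\kappa/\mu, \label{planrecg}
\end{equation}
with the conventions $g_{k,m}=0$ for $m<0$ or $m>k$ and $g_{0,0}=1$. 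The Kronecker factor $(1+\delta_{m,0})$ reflects the anomaly $(1+\delta_{k,1})$ in the Chebyshev recurrence \eqref{recrel}. A short direct computation gives $g_{1,0}=1+y$ and $g_{k,k}=1$.

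Next, I would establish the same recurrence \eqref{planrecg} for $G_{k,m}$ by a combinatorial decomposition of $\pi \in \mathcal{NHPP}_{[2(k+1)]}^{2m}$. A natural decomposition looks at the role of position $1$: either $1$ is an open connector, or $1$ belongs to a pair $\{1,l\}$ with $l$ even. In each sub-case, the non-crossing condition combined with the requirement that open connectors lie on a single arc of each pair produces a product decomposition of the restricted partition. Summing the resulting contributions and tracking the even-count statistic $\on{even}(\pi)$ -- noting that an ``even'' pair contributes a factor $y=\kappa/\mu$, while an ``odd'' pair contributes $1$ -- should reproduce the three terms of \eqref{planrecg}, with the anomalous factor $(1+\delta_{m,0})y$ accounting for a degenerate boundary configuration on the cyclic picture (precisely when the arc on one side of a pair contains no remaining open connectors).

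The induction in $k$ then proceeds from the base case $G_{1,1}=g_{1,1}=1$ and the marginal identity $G_{k,k}=g_{k,k}=1$ (obtained from the unique all-singleton partition). The auxiliary value $G_{k,0}$, which appears on the right of \eqref{planrecg} when $m=1$, is either defined via the recurrence itself or extended combinatorially by adopting a convention for $\gamma_\pi$ in the absence of open connectors; both routes give $G_{k,0}=g_{k,0}$ by construction, and this does not affect the claim, which concerns only $m \ge 1$. The main obstacle is the combinatorial step: the statistic $\on{even}(\cdot)$ is a \emph{global} feature of the cyclic partition, so each local reduction must be shown to preserve the weight $(\kappa/\mu)^{\on{even}(\pi)}$, and the $(1+\delta_{m,0})$ correction is the subtlest point, corresponding precisely to the boundary case where the well-definedness clause for $\gamma_\pi$ becomes degenerate. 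Should the direct bijection prove cumbersome, a fallback is to compute the generating function $\sum_k G_{k,m}\,z^k$ via a Dyck-path-like encoding of non-crossing half-pair partitions and to compare it with the rational generating function for $g_{k,m}$ obtained by inverting the tridiagonal Jacobi matrix associated with \eqref{recrel}.
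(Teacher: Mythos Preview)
Your derivation of the three-term recurrence \eqref{planrecg} for $g_{k,m}$ from the Chebyshev relation \eqref{recrel} is correct and is exactly what the paper does. The divergence is in how to show that $G_{k,m}$ satisfies the same recurrence. The paper does \emph{not} attempt a direct recurrence on $\mathcal{NHPP}_{[2k]}^{2m}$; instead it introduces an auxiliary combinatorial object, the set $D_{j,m,k}$ of black/white dot configurations on $[2k]$ with $j$ black dots on odd positions and $m+j$ white dots on even positions, proves that $\sum_j y^j\,\#D_{j,m,k}$ satisfies \eqref{planrecg} by the trivial ``delete the last two dots'' argument, and then exhibits an explicit bijection $\mathcal{NHPP}_{[2k]}^{2m,j}\to D_{j,m,k}$ (colour the non-initial endpoint of each pair black; inverse: match each black dot to the first available white dot moving counter-clockwise). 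This yields the closed form $\#\mathcal{NHPP}_{[2k]}^{2m,j}=\binom{k}{j}\binom{k}{m+j}$ and sidesteps both the global nature of $\on{even}(\cdot)$ and the undefined $G_{k,0}$ entirely.

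Your direct route has a concrete obstacle. Decomposing by the role of position~$1$ --- open connector versus paired with some $l$ --- naturally produces a \emph{convolution}-type identity (the arc $]1,l[$ containing no open connectors carries a full non-crossing pair partition, the complementary arc carries the rest), not the three-term recurrence in $m$. The three terms $m-1,\,m,\,m+1$ on the right of \eqref{planrecg} do not correspond to any single local move on $\mathcal{NHPP}$: passing from $G_{k+1,m}$ to $G_{k,m+1}$ would require \emph{gaining} two open connectors while removing two points, which only makes sense after the dot-structure encoding. Moreover, your handling of $G_{k,0}$ is circular: defining it ``via the recurrence itself'' presupposes that $G_{k+1,1}=g_{k+1,1}$, which is what you are trying to prove. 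The paper's bijection is precisely the missing idea that linearises the combinatorics; if you want to avoid it, the cleanest alternative is to prove the closed form $\#\mathcal{NHPP}_{[2k]}^{2m,j}=\binom{k}{j}\binom{k}{m+j}$ directly and then verify the recurrence on binomials.
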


The ideas of the proof are similar to those presented in \cite{KusalikMingoSpeicher}, Theorem 25 and 27. The first step is to provide a
combinatorial description of the coefficients $g_{k,m}$, $k\geq 0$, $0\leq m\leq k$. Thus we take $k\geq 0$, $0\leq m\leq k$ and $0\leq j\leq k-m$, and denote by $D_{j,m,k}$ the set of all dot structures of white and black dots on the set $[2k]$ such that

\grab

\begin{itemize}
	\item there are $j$ black dots on odd numbers and the remaining $k-j$ odd numbers have white dots,
	\item[]
	\item there are $m+j$ white dots on even numbers and the remaining $k-(m+j)$ even numbers have black dots.
\end{itemize}

\grab

Put $D_{j,m,k}=0$ if $j<0$ or $j>k-m$. We now have the following key fact.

\begin{prop}
For any $k\geq 1$ and $0\leq m\leq k$, we have
\begin{equation*}
g_{k,m} = \sum_{j=0}^{k-m} y^j \ \# D_{j,m,k} = \sum_{j=0}^{k-m} y^j \ \binom{k}{j} \binom{k}{m+j}.
\end{equation*}
\label{combi}
\end{prop}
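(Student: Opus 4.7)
The plan is to prove the identity $g_{k,m} = \sum_{j=0}^{k-m} y^j \binom{k}{j}\binom{k}{m+j}$ by induction on $k$, after first recasting the definition of $g_{k,m}$ as a three-term recurrence derived from \eqref{recrel}. The second equality $\#D_{j,m,k} = \binom{k}{j}\binom{k}{m+j}$ is immediate from the definition: the $j$ black-dotted positions among the $k$ odd numbers and the $m+j$ white-dotted positions among the $k$ even numbers are chosen independently.

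First, since $\Gamma = (g'_{k,m})$ is the matrix expressing $\Gamma_k(x) = \Gamma_k(x,1)$ in the monomial basis, the inverse matrix $\Gamma^{-1} = (g_{k,m})$ expresses the monomials in the $\Gamma$-basis, i.e.\
\begin{equation*}
x^k = \sum_{m=0}^k g_{k,m}\, \Gamma_m(x).
\end{equation*}
Multiplying by $x$ and substituting the three-term recurrence \eqref{recrel}, namely $x\Gamma_m(x) = \Gamma_{m+1}(x) + (1+y)\Gamma_m(x) + (1+\delta_{m,1})\,y\,\Gamma_{m-1}(x)$, and then collecting the coefficient of each $\Gamma_\ell(x)$, yields
\begin{equation*}
g_{k+1,\ell} \;=\; g_{k,\ell-1} + (1+y)\,g_{k,\ell} + (1+\delta_{\ell,0})\,y\,g_{k,\ell+1}, \qquad \ell \ge 0,
\end{equation*}
with the boundary/initial data $g_{0,0}=1$, $g_{k,m}=0$ for $m>k$, and $g_{k,-1}:=0$.

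Second, I would verify that $h_{k,\ell} := \sum_{j\ge 0} y^j \binom{k}{j}\binom{k}{\ell+j}$ satisfies the same recurrence and initial data. Expanding each of $\binom{k+1}{j}$ and $\binom{k+1}{\ell+j}$ by Pascal's rule produces four sums, and after the shifts $j \mapsto j-1$ one obtains for every $\ell \ge 1$
\begin{equation*}
h_{k+1,\ell} = h_{k,\ell} + h_{k,\ell-1} + y\,h_{k,\ell+1} + y\,h_{k,\ell},
\end{equation*}
matching the $g$-recurrence. For $\ell = 0$ the cross Pascal terms collapse to $2\sum_j y^j\binom{k}{j}\binom{k}{j-1} = 2y\,h_{k,1}$ while the ``square'' terms give $h_{k,0} + y\,h_{k,0}$, so $h_{k+1,0} = (1+y)h_{k,0} + 2y\,h_{k,1}$, which is precisely the $\ell=0$ case of the recurrence for $g$. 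The base case $k=0$ is trivial since $h_{0,0}=1=g_{0,0}$.

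The main obstacle I anticipate is keeping straight the asymmetric factor $(1+\delta_{m,1})$ in the monic Chebyshev recurrence \eqref{recrel}, which survives in the $g$-recurrence as $(1+\delta_{\ell,0})$. This asymmetry is what links the apparently symmetric Pascal expansion to the correct boundary behavior at $\ell = 0$, and it must be tracked carefully; otherwise the symbolic computation for $\ell \ge 1$ would wrongly appear to fail at $\ell = 0$. Once this boundary bookkeeping is verified, the induction closes and both equalities in the proposition follow.
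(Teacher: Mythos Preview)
Your proposal is correct and follows essentially the same strategy as the paper: derive from \eqref{recrel} the recurrence
\[
g_{k+1,\ell} = g_{k,\ell-1} + (1+y)\,g_{k,\ell} + (1+\delta_{\ell,0})\,y\,g_{k,\ell+1},
\]
and then show that the right-hand side satisfies the same recurrence, with matching initial data. Your derivation of the $g$-recurrence via $x^k = \sum_m g_{k,m}\Gamma_m(x)$ is equivalent to the paper's matrix formulation $S\Gamma^{-1} = \Gamma^{-1}S + (1+y)\Gamma^{-1} + y\,\Gamma^{-1}T$.

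The one genuine difference is in how the recurrence is verified for the right-hand side. The paper works with $\bar g_{k,m}=\sum_j y^j\,\#D_{j,m,k}$ and checks the recurrence \emph{combinatorially}, splitting $D_{j,m,k+1}$ into four pieces according to the colors of the last two dots. For $m\ge 1$ this is routine, but for $m=0$ the paper needs a nontrivial bijection $\tilde\varphi^{(1)}\colon D_{j,0,k+1}^{(1)}\to D_{j-1,1,k}$ (delete the last two dots, reverse all colors, rotate by one) to produce the extra factor of $2$ in front of $y\,g_{k,1}$. You instead work directly with $h_{k,\ell}=\sum_j y^j\binom{k}{j}\binom{k}{\ell+j}$ and verify the recurrence \emph{algebraically} via Pascal's rule; the boundary case $\ell=0$ then drops out from the index shift $\sum_j y^j\binom{k}{j}\binom{k}{j-1}=y\,h_{k,1}$, with no separate combinatorial trick required. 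Both arguments are valid; yours is a little shorter at the boundary, while the paper's makes the role of the dot structures more explicit (which is natural there since $D_{j,m,k}$ is later put in bijection with $\mathcal{NHPP}_{[2k]}^{2m,j}$).
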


\begin{proof}
The second equality is obvious. To prove the first one, we will show that both sides satisfy the same recurrence relations. Those for the left hand side follow from the 
recurrence \eqref{recrel} of the $\Gamma_k$ polynomials. In fact, using the matrices 
\begin{equation*}
S:= \begin{pmatrix}
	0 & 1 & 0 & 0 & \\
	0 & 0 & 1 & 0 & \\
	0 & 0 & 0 & 1 & \\
	  & \ddots & \ddots & \ddots & \ddots
\end{pmatrix},
\quad
T:= \begin{pmatrix}
	0 & 0 & 0 & 0 & \\
	2 & 0 & 0 & 0 & \\
	0 & 1 & 0 & 0 & \\
	0 & 0 & 1 & 0 & \\
	  & \ddots & \ddots & \ddots & \ddots
\end{pmatrix},
\end{equation*}

equation \eqref{recrel} can be written as
\begin{equation*}
\Gamma S = S \Gamma + (1+y) \Gamma + y T \Gamma,
\end{equation*}

implying
\begin{equation*}
S \Gamma^{-1} = \Gamma^{-1} S + (1+y) \Gamma^{-1} + y \Gamma^{-1} T.
\end{equation*}

In terms of the entries $g_{k,m}$, this yields for any $k\geq 1$,
\begin{equation}
g_{k+1,m} = g_{k,m-1} + (1+y) g_{k,m} + y g_{k,m+1}, \quad m\geq 1,
\label{rec1}
\end{equation}
and
\begin{equation}
g_{k+1,0} = (1+y) g_{k,0} + 2y g_{k,1}.
\label{rec2}
\end{equation}

Further, we have $g_{1,0} = 1+y$ and $g_{1,1} = 1$.  Now we need to show that \eqref{rec1} and \eqref{rec2} hold for $\bar{g}_{k,m}:= \sum_{j=0}^{k-m} y^j \ \# D_{j,m,k}$. 
To this end, we divide $D_{j,m,k+1}$ for $k\geq 1$ into four distinct subsets
\begin{align*}
D_{j,m,k+1}^{(1)} & := \{\eta\in D_{j,m,k+1}: \ \text{the dots} \ 2k+1 \ \text{and} \ 2k+2 \ \text{are white} \}, \\
D_{j,m,k+1}^{(2)} & := \{\eta\in D_{j,m,k+1}: \ \text{the dot} \ 2k+1 \ \text{is black, the dot} \ 2k+2 \ \text{is white} \}, \\
D_{j,m,k+1}^{(3)} & := \{\eta\in D_{j,m,k+1}: \ \text{the dot} \ 2k+1 \ \text{is white, the dot} \ 2k+2 \ \text{is black} \}, \\
D_{j,m,k+1}^{(4)} & := \{\eta\in D_{j,m,k+1}: \ \text{the dots} \ 2k+1 \ \text{and} \ 2k+2 \ \text{are black} \},
\end{align*}

and, for $m\geq 1$, we construct four bijections
\begin{align*}
\vp^{(1)}: D_{j,m,k+1}^{(1)} & \to D_{j,m-1,k}, \\
\vp^{(2)}: D_{j,m,k+1}^{(2)} & \to D_{j-1,m,k}, \\
\vp^{(3)}: D_{j,m,k+1}^{(3)} & \to D_{j,m,k}, \\
\vp^{(4)}: D_{j,m,k+1}^{(4)} & \to D_{j-1,m+1,k}.
\end{align*}

Each map $\vp^{(i)}$, $i=1,\ldots,4$, simply deletes the dots $2k+2$ and $2k+1$. In case $i=1$, for example, we have that two white dots are eliminated, one dot on an even number and one dot on an odd number. Thus, there are $2k$ dots left, $j$ black dots on odd numbers and $m+j-1$ white dots on even numbers. In particular, $\vp^{(1)}$ maps to $D_{j,m-1,k}$, and is obviously bijective. Similar considerations can be made for $\vp^{(2)}$, $\vp^{(3)}$ and $\vp^{(4)}$. We can thus conclude that
\begin{equation*}
\# D_{j,m,k+1} = \# D_{j,m-1,k} + \# D_{j-1,m,k} + \# D_{j,m,k} + \# D_{j-1,m+1,k},
\end{equation*}

implying
\begin{align*}
\bar{g}_{k+1,m} &= \bar{g}_{k,m-1} + y \bar{g}_{k,m} + \bar{g}_{k,m} + y \bar{g}_{k,m+1} \\
								&= \bar{g}_{k,m-1} + (1+y) \bar{g}_{k,m} + y \bar{g}_{k,m+1}.
\end{align*}

Now let $m=0$. The maps $\vp^{(2)}$, $\vp^{(3)}$ and $\vp^{(4)}$ can be defined as above. However, we want to change the definition of $\vp^{(1)}$ slightly to obtain the map
\begin{equation*}
\tilde{\vp}^{(1)}: D_{j,0,k+1}^{(1)} \to D_{j-1,1,k},
\end{equation*}

which first erases the dots $2k+2$ and $2k+1$, so we have $j$ black and $k-j$ white dots left on odd numbers, and $j-1$ white and $k-(j-1)$ black dots on even numbers. Then, $\tilde{\vp}^{(1)}$ reverses the color of all dots and afterwards, shifts them by $1$ in clockwise direction. Now, there are $j-1$ black and $k-(j-1)$ white dots on odd numbers. On even numbers, we have $j = 1 + (j-1)$ white and $k-j = k - (1+ (j-1))$ black dots. Thus, we see that $\tilde{\vp}^{(1)}$ is a bijection from $D_{j,0,k+1}^{(1)}$ to $D_{j-1,1,k}$. To sum up, we have
\begin{equation*}
\# D_{j,0,k+1} = \# D_{j-1,1,k} + \# D_{j-1,0,k} + \# D_{j,0,k} + \# D_{j-1,1,k}.
\end{equation*}

In particular,
\begin{align*}
\bar{g}_{k+1,0} &= y \bar{g}_{k,1} + y \bar{g}_{k,0} + \bar{g}_{k,0} + y \bar{g}_{k,1} \\
								&= (1+y) \bar{g}_{k,0} + 2y \bar{g}_{k,1}.
\end{align*}

Since the recurrence relations \eqref{rec1} and \eqref{rec2} now hold for $\{\bar{g}_{k,m}, k\geq 1, 0\leq m\leq k\}$, it remains to check that $\bar{g}_{1,0} = g_{1,0} = 1 + y$ and $\bar{g}_{1,1} = g_{1,1} = 1$. But this can be simply calculated as
\begin{equation*}
\bar{g}_{1,0} = y^0 \ \# D_{0,0,1} + y^1 \ \# D_{1,0,1} = 1+y, \qquad \bar{g}_{1,1} = y^0 \ \# D_{0,1,1} = 1.
\end{equation*}

This completes the proof of Proposition \ref{combi}.

\end{proof}

In view of the definition of $G_{k, m}$ in \eqref{Gkm}, with the choice $y = \kappa/\mu$, Proposition \ref{Gg} is proven once the following lemma 
is established.

\begin{lemma}
For any $k\geq 1$, $1\leq m\leq k$ and $0\leq j\leq k-m$, there is a bijection between $\mathcal{NHPP}_{[2k]}^{2m,j}$ and $D_{j,m,k}$.
\label{dot}
\end{lemma}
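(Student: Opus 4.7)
The plan is to construct an explicit bijection $\Phi: \mathcal{NHPP}_{[2k]}^{2m,j} \to D_{j,m,k}$. The key preliminary observation is that, because $m \geq 1$, every $2$-block $\{l, l'\}$ of any $\pi \in \mathcal{NHPP}_{[2k]}^{2m}$ has one odd and one even endpoint. Indeed, if both endpoints had the same parity, each of the cyclic arcs $]l,l'[$ and $]l',l[$ would have odd cardinality; the pairs of $\pi$ inside each arc form a non-crossing sub-matching and therefore account for an even number of positions, so each arc would have to contain an odd (hence positive) number of open connectors, contradicting the axiom that open connectors occur on at most one side of any $2$-block.

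I would then define $\Phi(\pi)$ by coloring each position $l \in [2k]$ \emph{black} precisely when $l$ is the non-initial endpoint of its $2$-block (i.e.\ $l \in \{l_1, l_2\} \in \pi$ with $l \ne \gamma_\pi(l_1, l_2)$), and \emph{white} otherwise; so open connectors and initial endpoints are both colored white. By the parity dichotomy above, the non-initial endpoint of an even pair (one with $\gamma$ at an even position) lies at an odd position, while that of an odd pair lies at an even position. Since there are $j$ even and $k-m-j$ odd pairs, $\Phi(\pi)$ has exactly $j$ black dots on odd positions and $k-m-j$ on even positions, so $\Phi(\pi) \in D_{j,m,k}$.

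For the inverse map $\Psi: D_{j,m,k} \to \mathcal{NHPP}_{[2k]}^{2m,j}$ I would induct on $k-m$ via an ``innermost-pair peeling'' construction. The base $k=m$ is trivial since both sides are singletons. For $k > m$, at least one black and one white dot exist, so the cyclic color sequence contains at least one white-then-black adjacency $(l, l+1)$ (say the one with smallest $l$); declaring $\{l, l+1\}$ to be a $2$-block with $\gamma = l$, removing these two positions, and relabeling the remaining cycle of length $2(k-1)$ so as to preserve parities yields a pattern in $D_{j',m,k-1}$ with $j' = j$ if $l$ is odd and $j' = j-1$ if $l$ is even. The inductive hypothesis then produces the partition in $\mathcal{NHPP}_{[2(k-1)]}^{2m,j'}$ which, after reinsertion of the peeled block, yields $\Psi(\eta)$.

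The main obstacle is verifying well-definedness of $\Psi$: the output should not depend on which white-then-black adjacency is peeled off at each step. I would resolve this by a local commutativity argument: any two distinct white-then-black adjacencies are position-disjoint (overlapping would force a single position to be simultaneously white and black), so peeling them off in either order produces the same reduced structure. Once this is established, the identities $\Phi \circ \Psi = \mathrm{id}$ and $\Psi \circ \Phi = \mathrm{id}$ both follow by checking that each map is compatible with innermost-pair reduction, completing the proof of the lemma.
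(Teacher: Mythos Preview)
Your forward map $\Phi$ is exactly the paper's: colour the non-initial endpoint of each $2$-block black and everything else white. Your preliminary parity observation (that every $2$-block has one odd and one even endpoint) is a useful explicit step that the paper leaves implicit; the paper only remarks in passing that ``by this procedure, we only connect odd numbers with even.''

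The genuine difference is in the inverse direction. The paper builds $\Psi$ in one shot by the standard parenthesis-matching rule: from each black dot, walk counter-clockwise and match it to the first \emph{available} white dot, skipping one extra white dot for every black dot passed along the way. You instead peel off a white--black adjacency $(l,l+1)$, recurse on the reduced cycle, and reinsert. Both constructions produce the same non-crossing matching, so both are correct; the paper's version is shorter because it avoids the confluence discussion, while yours makes the link with nearest-neighbour elimination (as used earlier in the paper) more transparent. One small point to tidy in your write-up: when the only white--black adjacency sits at the wrap-around $(2k,1)$, the naive relabelling shifts by an odd amount and does not preserve parities; you should either rotate the cycle by $2$ first or handle this boundary case explicitly.
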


\begin{proof}
To any given $\pi\in\mathcal{NHPP}_{[2k]}^{2m,j}$, we assign a dot structure in the following way: \\

\begin{enumerate}
	\item[1.] If $l\sim_\pi l'$ and $\gamma_\pi(l,l') = l$, then color $l'$ black.
	\item[2.] Color the remaining dots white.
\end{enumerate}

\vspace{1.5cm}

\begin{figure}[ht]
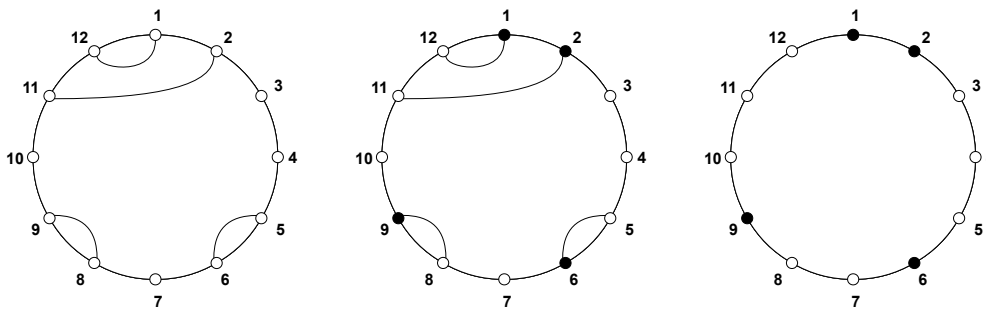

  \centering
  \begin{minipage}[b]{4.5 cm}
    \includegraphics[trim = 0cm 5cm 0cm 8cm, width=45mm]{fig4.pdf}  
  \end{minipage}
  \begin{minipage}[b]{4.5 cm}
    \includegraphics[trim = 0cm 5cm 0cm 8cm, width=45mm]{fig5.pdf}  
  \end{minipage}
  \begin{minipage}[b]{4.5 cm}
    \includegraphics[trim = 0cm 5cm 0cm 8cm, width=45mm]{fig6.pdf}  
  \end{minipage}
 \caption{In this example, we take k=6, m=2, j=2 and $\pi=\{\{1,12\},\{2,11\},\{3\},\{4\},\{5,6\},\{7\},\{8,9\},\{10\}\}  \in \mathcal{NHPP}_{[12]}^{4,2}$. We color the end point of any $2$-block black, \ie the points $1,2,6,9$. The last picture shows the resulting dot structure which is in $D_{2,2,6}$.}
\end{figure}

\grab

Since we have exactly $j$ even and $k-(m+j)$ odd pairs, the construction above leads to a dot structure which is in $D_{j,m,k}$. 
To obtain an inverse mapping, we start at any black dot and connect it to the first available white dot when moving counter-clockwise. Here, available means that every time we pass over a black dot we must skip over an additional white dot. Note that by this procedure, we only connect odd numbers with even, and even numbers with odd. Clearly, we obtain a partition $\pi$ in $\mathcal{NHPP}_{[2k]}^{2m}$. Further, we have that if $l\sim_\pi l'$ with $\gamma_\pi(l,l')=l$, then $l$ was a white dot and $l'$ a black one. 
Thus $\on{even}(\pi)$ is equal to the number of black dots on odd numbers, that is $\on{even}(\pi)=j$. In particular, $\pi\in \mathcal{NHPP}_{[2k]}^{2m,j}$.

\end{proof}

\end{appendix}

\bibliographystyle{plain}

\bibliography{ssb-dependence}

\begin{thebibliography}{10}

\bibitem{BoseSenAIHP2010}
Arup Bose, Sreela Gangopadhyay, and Arnab Sen.
\newblock Limiting spectral distribution of {$XX'$} matrices.
\newblock {\em Ann. Inst. Henri Poincar\'e Probab. Stat.}, 46(3):677--707,
  2010.

\bibitem{CD}
Thierry Cabanal-Duvillard.
\newblock Fluctuations de la loi empirique de grandes matrices al\'eatoires.
\newblock {\em Ann. I. H. Poincar\'e (B), Probabilit\'es et Statistiques},
  37:373--402, 2001.

\bibitem{ElKarouiAOP2007}
Noureddine El~Karoui.
\newblock Tracy-{W}idom limit for the largest eigenvalue of a large class of
  complex sample covariance matrices.
\newblock {\em Ann. Probab.}, 35(2):663--714, 2007.

\bibitem{stolzcredner}
Katrin Hofmann-Credner and Michael Stolz.
\newblock Wigner theorems for random matrices with dependent entries: ensembles
  associated to symmetric spaces and sample covariance matrices.
\newblock {\em Elect. Comm. in Probab.}, 13:401--414, 2008.

\bibitem{JohanssonDuke}
Kurt Johansson.
\newblock On fluctuations of eigenvalues of random {H}ermitian matrices.
\newblock {\em Duke Math. J.}, 91(1):151--204, 1998.

\bibitem{Jonsson}
Dag Jonsson.
\newblock Some limit theorems for the eigenvalues of a sample covariance
  matrix.
\newblock {\em J. Multivariate Anal.}, 12(1):1--38, 1982.

\bibitem{KusalikMingoSpeicher}
Timothy Kusalik, James~A. Mingo, and Roland Speicher.
\newblock Orthogonal polynomials and fluctuations of random matrices.
\newblock {\em J. reine angew. Math.}, 604:1--46, 2007.

\bibitem{Schenker_Schulz-Baldes}
Jeffrey Schenker and Hermann Schulz-Baldes.
\newblock Semicircle law and freeness for random matrices with symmetries or
  correlations.
\newblock {\em Mathematical Research Letters}, (12):531--542, 2005.

\bibitem{Schenker_Schulz-Baldes2}
Jeffrey Schenker and Hermann Schulz-Baldes.
\newblock Gaussian fluctuations for random matrices with correlated entries.
\newblock {\em Int. Math. Res. Not. IMRN}, (15):Art. ID rnm047, 36, 2007.

\bibitem{ShiryayevBook}
Albert~N. Shiryayev.
\newblock {\em Probability}, volume~95 of {\em Graduate Texts in Mathematics}.
\newblock Springer-Verlag, New York, 1984.
\newblock Translated from the Russian by R. P. Boas.

\end{thebibliography}

\end{document}